\documentclass{article}
								
\usepackage{amssymb,amsfonts, amsmath, amsthm,amsbsy}
\usepackage{enumitem}

\usepackage{url,breakurl}
\usepackage{tikz}

\usepackage{hyperref}

\newcommand{\define}[1]{{\bf \boldmath{#1}}}

\theoremstyle{definition}
\newtheorem{thm}{Theorem}

\newtheorem{definition}[thm]{Definition}

\newtheorem*{definition:magA}{Magnitude homology (Definition A)}
\newtheorem*{definition:magB}{Magnitude homology (Definition B)}
\newtheorem*{definition:magBb}{Magnitude homology (Definition B')}
\newtheorem{lemma}[thm]{Lemma}
\newtheorem{corollary}[thm]{Corollary}

\newtheorem{proposition}[thm]{Proposition}
\newtheorem{example}[thm]{Example}

\newtheorem{remark}[thm]{Remark}


\newcommand{\N}{\mathbb{N}}

\newcommand{\V}{\mathcal{V}}
\newcommand{\Z}{\mathbb{Z}}
\newcommand{\op}{\mathrm{op}}
\newcommand{\ch}{\mathrm{ch}}
\newcommand{\ob}{\mathrm{ob}}
\newcommand{\F}{\mathbb{F}}
\newcommand{\K}{\mathbb{K}}

\newcommand{\simp}{\mathrm{sim}}
\newcommand{\tot}{\mathrm{sing}}
\newcommand{\UU}{\mathcal{U}}
\newcommand{\VV}{\mathcal{V}}
\newcommand{\sSet}{\mathrm{sSet}}
\newcommand{\Set}{\mathrm{Set}}
\newcommand{\sAb}{\mathrm{sAb}}
\newcommand{\Ab}{\mathrm{Ab}}
\newcommand{\Fun}{\mathrm{Fun}}

\newcommand{\id}{\mathrm{id}}

\newcommand{\Vect}{\mathrm{Vect}}

\newcommand{\colim}{\mathrm{colim}}
\renewcommand{\lim}{\mathrm{lim}}

\newcommand{\NN}{\mathbb{N}}



\begin{document}

\title{Magnitude meets persistence. Homology theories for filtered simplicial sets}
\date{}
\author{Nina Otter
\thanks{UCLA Mathematics Department, Los Angeles, CA, USA 90095-1555, {\em email:} \url{n.otter@qmul.ac.uk} }}

\maketitle


\begin{abstract}

The Euler characteristic is an invariant of a topological space that  in a precise sense captures  its canonical notion of size, akin to the cardinality of a set. The Euler characteristic is closely related to the homology of a space, as it can be expressed as the alternating sum of its Betti numbers, whenever the sum is well-defined. Thus, one says that homology categorifies the Euler characteristic. 
In his work on the generalisation of cardinality-like invariants,  Leinster introduced the magnitude of a metric space, a real number that counts the ``effective number of points'' of the space and has been shown to encode many invariants of metric spaces from integral geometry and geometric measure theory. In 2015, Hepworth and Willerton introduced a homology theory for metric graphs, called magnitude homology, which categorifies the magnitude of a finite metric graph.  This work was subsequently generalised to enriched categories  by Leinster and Shulman, and the homology theory that they introduced categorifies magnitude for arbitrary finite metric spaces. When studying a metric space, one is often only interested in the metric space up to a rescaling of the distance of the points by a non-negative real number. The magnitude function describes how  the effective number of points changes as one scales the distance, and it is completely encoded by
  magnitude homology.  When studying a finite metric space in topological data analysis using persistent homology, one approximates the space through a nested sequence of simplicial complexes  so as to  recover topological information about the space by studying the homology of this sequence. Here we relate magnitude homology and persistent homology as two different ways of computing homology of filtered simplicial sets.

\end{abstract}

\section{Introduction}
In a letter to Goldbach written in 1750, Euler \cite{Euler} noted that for any polyhedron consisting of $F$ regions, $E$ edges and $V$ vertices one obtains 
$V - E + F = 2$. This sum is known as the \define{Euler characteristic} of the polyhedron. 
While  one usually  first encounters the Euler characteristic in relation to topological spaces, one can more generally define the Euler characteristic of an object in any symmetric monoidal  category \cite{M01}, and this can be thought of as its canonical size, a ``dimensionless'' measure. The irrelevance of topology for the notion of Euler characteristic, and how it should be  thought of as an invariant giving a measure of the size or cardinality of an object was  made precise among others by Schanuel \cite{S90}.

In his work on the generalisation of the Euler characteristic as a cardinality-like invariant, Leinster \cite{L08} introduced an invariant for finite categories generalising work done by Rota on posets. The invariant introduced by Leinster generalises both the cardinality of a set, as well as the topological  Euler characteristic. 
In subsequent work \cite{L13} Leinster generalised this invariant to enriched categories, calling it \define{magnitude}.

Here we are interested in the magnitude of metric spaces.
In 1973 Lawvere \cite{L73} observed that every metric space is a category enriched over the monoidal category $\boldsymbol{[0,\infty]}^\op$ with objects non-negative real numbers, and a morphism $\epsilon'\to \epsilon$ whenever $\epsilon'\geq \epsilon$, with tensor product given by addition. Such enriched categories are called ``Lawvere metric spaces'', and a Lawvere metric space is the same thing as  an extended quasi-pseudometric space. 
 The magnitude of a metric space is a real number that can be thought of as measuring the ``effective number of points'' of the space, see \cite[Proposition 2.8]{LM17}. The \define{magnitude function} describes how  the effective number of points changes as one scales the distances of the points of the metric space by a non-negative real number.

 The Euler characteristic of a topological space $X$ is closely related to the singular homology of the space, as it can be expressed as the alternating sum of its Betti numbers
\[
\chi(X)=\sum_{i=0}^\infty (-1)^i\beta_i(X) \, ,
\]
whenever the sum and the summands are finite.
 One then says that homology \define{categorifies} the Euler characteristic.
Thus, a natural question to ask is whether there is a homology theory for metric spaces that categorifies in an analogous way the magnitude.  
Hepworth and Willerton answered this question in the affirmative for finite metric spaces associated to  graphs, by introducing  \define{magnitude homology} for graphs \cite{HW17}. Their work was subsequently extended to arbitrary metric spaces by Leinster and Shulman \cite{LS17}, who define  magnitude homology for arbitrary metric spaces as a special case of Hochschild homology for enriched categories. 
When the metric space is finite, this  homology theory
 categorifies the magnitude.

In a first version of their manuscript\footnote{In the published version of the manuscript \cite{LS17}, these questions  are discussed under item (7) in Section 8.}, Leinster and Shulman  listed a series of open problems, of which two were as follows:
{\it
\noindent
\begin{itemize}  
\item Magnitude homology only ``notices'' whether the triangle inequality is a strict equality or not. Is there a ``blurred'' version that notices ``approximate equalities''?
\item Almost everyone who encounters both magnitude homology and persistent homology feels that there should be some relationship between them. What is it?
\end{itemize}
}
\label{questions}

Here we give an answer to these questions, which we show are intertwined: we define a blurred version of magnitude homology, and show that it is the persistent homology with respect to a certain filtered simplicial set that approximates the Vietoris--Rips simplicial set (as shown in the proof of Theorem \ref{T:nerve vr}), and thus satisfies theoretical guarantees that might make it suitable for the study of data. Ordinary and blurred versions of magnitude homology are morally very different homology theories associated to filtered simplicial sets:  the ordinary version forgets the information given by the filtration of the simplicial set, which is exactly the ``persistent'' information captured by persistent homology, and hence the blurred version of magnitude homology.  Finally, we relate  blurred and ordinary magnitude homology with Vietoris homology, by taking their categorical limits, and we show that the limit of blurred magnitude homology coincides with Vietoris homology, while the limit of magnitude homology is trivial.

We note that while in Vietoris homology and persistent homology one works with simplicial complexes, the definition of  magnitude homology is based on  simplicial  sets.
Simplicial complexes present advantages from the computational point of view, as a simplex can be uniquely specified by listing its vertices, but from the theoretical point of view simplicial sets are better suited. In Section \ref{S:SS} we  explain how to a given  simplicial complex one can assign a simplicial set such that their geometric realisations are homotopy equivalent. To make this manuscript accessible to a broad audience,  we have taken special care in introducing  notions related to both magnitude homology, as well as persistent homology.

\subsection{Structure of the paper}

The paper is structured as follows:
\begin{itemize}
\item We cover preliminaries about simplicial complexes and  simplicial sets in Section \ref{S:SS}; enriched categories and Lawvere metric spaces in Section \ref{S:lawvere};  filtered simplicial sets in Section \ref{S:filtered simplicial sets};  persistent as well as graded objects in Section \ref{S:persistent graded}; and coends in Section \ref{S:coends}.
\item In Section \ref{S:MH} we give the definition of magnitude homology for metric spaces as a special case of Hochschild homology following \cite{LS17} (see Definition A in Section \ref{SS:hochschild}), and then introduce an alternative definition based on the enriched nerve (Definition B' in Section \ref{SS:alternative}), and show that they are equivalent in Proposition \ref{P:MH eq}. 

\item In Section \ref{S:PH} we give a general definition of persistent homology, while in Section \ref{S:MH and PH} we introduce  blurred magnitude homology, taking as starting point the alternative definition of magnitude homology (Definition B' in Section \ref{SS:alternative}), and show that it is the persistent homology taken with respect to the enriched nerve.

\item In  Section \ref{S:limit}. we relate blurred and ordinary magnitude homology to Vietoris homology.\end{itemize}


\section{Simplicial complexes and simplicial sets}\label{S:SS}
The number of researchers who have a working knowledge of both simplicial complexes and simplicial sets is arguably  small, therefore
here we recall the basic notions and definitions. 
 Simplicial complexes and simplicial sets can be seen as combinatorial versions of topological spaces; they are related to topological spaces by the geometric realisation. We first recall the definitions of simplicial complexes, simplicial sets and the corresponding geometric realisations. We then  discuss how one can assign a simplicial set to a simplicial complex in such a way that the corresponding geometric realisations are homeomorphic.

\begin{definition}\label{D:simplicial complex}
A \define{simplicial complex} is a tuple $K=(V,\Sigma)$ where $V$  is a set, and $\Sigma$ is a set of non-empty finite subsets of $V$ such that:
\begin{enumerate}[label=(\roman*)]
\item for all $v\in V$ we have that $\{v\}\in \Sigma$
\item $\Sigma$ is closed with respect to taking subsets.
\end{enumerate}
The elements of $\Sigma$ with cardinality $n+1$ are called  \define{$n$-simplices} of $K$. The elements of $V$ are called \define{vertices} of $K$.
Given two simplicial complexes  $K=(V,\Sigma)$ and $K'=(V',\Sigma')$, a \define{simplicial map} $K\to K'$ is a map $f\colon V\to V'$ such that for all $\sigma\in \Sigma$ we have $f(\sigma)\in \Sigma'$.
\end{definition}

\begin{remark}
We note that if one wants the $0$-simplices to coincide with the vertices of a simplicial complex, then  condition (i) in Definition \ref{D:simplicial complex}  cannot be dispensed of; while condition (ii) implies that all vertices contained in simplices are in $\Sigma$, condition (i) guarantees that these are the only vertices. Often in the topological data analysis literature one finds a definition of simplicial complex as a variant of Definition \ref{D:simplicial complex} in which condition (i) is omitted, and in such a definition  one thus allows  vertices that are not $0$-simplices. Such simplicial complexes are studied in combinatorial commutative algebra, where they are  known to correspond to square-free monomial ideals, see \cite[Chapter 1]{MS05}. One could give a definition equivalent to Definition \ref{D:simplicial complex} by only requiring closure under taking subsets as follows: let $\Sigma$ be a family of non-empty finite sets closed under taking subsets, and let $V(\Sigma)=\bigcup \Sigma$. Then $(V(\Sigma),\Sigma)$ is a simplicial complex according to Definition  \ref{D:simplicial complex}.
 \end{remark}

To define simplicial sets, we first need to introduce the ``simplex category'' $\Delta$. Consider the category with objects finite non-empty totally ordered sets, and morphisms given by order preserving maps. The skeleton of this category is denoted by $\Delta$ and called \define{simplex category}. In other words, $\Delta$ has objects given by a  totally ordered set
$[n]=\{0,1,\dots , n\}$ 
for every natural number $n$, and morphisms order-preserving maps.

\begin{definition} Denote by $\mathbf{\Set}$  the category with objects sets and morphisms maps of sets. 
A \define{simplicial set} is a  functor $S\colon \Delta^\op\to \Set$. The elements of $S(n)$ are called \define{$n$-simplices}.
\end{definition}

\noindent
Explicitly, one can show that a simplicial set is a collection of sets $\{S_n\}_{n\in \mathbb{N}}$ together with so-called \define{face} maps 
\[
d_i\colon S_n\to S_{n-1}
\]
and \define{degeneracy} maps
\[
s_i\colon S_n\to S_{n+1}
\]
for all $0\leq i \leq n$, that satisfy certain compatibility conditions, see  \cite[Def.\ 1.1]{C71}.

The geometric realisation functor gives a canonical way to associate a topological space to a simplicial complex or set. For this, one first chooses a topological model for $n$-simplices, namely the standard $n$-simplex $\Delta^n$:
\begin{definition}
The \define{standard $n$-simplex} is the subset of Euclidean space
\[
\Delta^n=\left \{ (x_0,\dots , x_n)\in \mathbb{R}^n \mid \sum_{i=0}^n x_i=1 \, , \text{ and } 0\leq x_i \leq 1 \text{ for all } i\right \}\, .
\]

\noindent

Furthermore, there are \define{face inclusions} 
\[
\sigma_i\colon \Delta^n\to \Delta^{n+1}\colon (x_0,\dots , x_n)\mapsto (x_0,\dots , x_{i-1},0,x_i,\dots, x_n) \, .
\]
\end{definition}

\noindent
Then,  to define the geometric realisation  one
proceeds to glue together standard simplices:

\begin{definition}
Given a simplicial complex $K=(V,\Sigma)$, we choose a total order on the set of vertices $V$, and we define its \define{geometric realisation $|K|$} to be  the  quotient space 
\[
\bigcup_{\sigma \in \Sigma}\Delta^{|\sigma|-1}\times \{\sigma\}/\sim
\]

\noindent
where  $\bigcup_{\sigma \in \Sigma}\Delta^{|\sigma|-1}\times \{\sigma\}$
 is endowed with the disjoint union space topology, while the equivalence relation $\sim$ is the transitive closure of the following relation 
\[
\Big\{\Big((x,d_i(\sigma)),(\sigma_i(x),\sigma)\Big) \mid  x\in \Delta^{|\sigma|-1}, \text{ and } \; \sigma\in \Sigma  \Big\}\, .
\]
In other words, whenever  $\sigma\subseteq \tau$, we use the face inclusions to identify the copy of the standard simplex corresponding to $\sigma$ with a subset of the  copy of the standard simplex corresponding to $\tau$.

Similarly, given a simplicial set  $S\colon \Delta^\op\to \Set$, its \define{geometric realisation $|S|$} is the quotient space

\[
\bigcup_{n\in \mathbb{N}}\Delta^n\times S_n/\sim
\]

\noindent
where the equivalence relation $\sim$ is the transitive closure of the union of the relations
\[
\Big\{\Big((x,d_i(\sigma)),(\sigma_i(x),\sigma)\Big) \colon  x\in \Delta^n \text{ and } \sigma \in S_{n+1}\Big\}\, , \text{ and }
\]

\[
\Big\{\Big((x,s_i(\sigma)),(\delta_i(x),\sigma)\Big) \colon  x\in \Delta^n \text{ and } \sigma \in S_{n-1}\Big\}\, . 
\]
\end{definition}

Now, given a  simplicial complex $K=(V,\Sigma)$, we assign to it a  simplicial set so that its geometric realisation is homeomorphic to that of $K$.

\begin{definition} Let $K=(V,\Sigma)$ be a simplicial complex.
Choose a  total order on $V$.
Define 

\[
K^\simp_n=\{(x_0,\dots , x_n)\mid \{x_0,\dots, x_n\}\in \Sigma \text{ and } x_0\leq \dots \leq x_n\} \, ,
\]
\noindent
 and for  $0\leq i \leq n$ let 
\[
d_i\colon K^{\simp}_{n}\to K^\simp_{n-1}\colon (x_0,\dots , x_n)\mapsto (x_0,\dots , \hat{x_{i}},\dots , x_n) \, ,
\]
where $ \hat{x_{i}}$ means that that entry is missing, and let
\[
s_i\colon K^\simp_{n}\to K^\simp_{n+1}\colon (x_0,\dots , x_{n})\mapsto (x_0,\dots ,x_i,x_i,\dots , x_{n}) \, .
\]

\end{definition}
\noindent
It is then easy to   show that $\{K^\simp_n\}_{n\in \mathbb{N}}$ together with the maps $d_i$ and $s_i$ is a simplicial set. We denote this simplicial set by $K^\simp$. Furthermore, we have:

\begin{lemma}\label{L:geom real simpl}
The geometric realisations of $K^\simp$ and $K$ are homeomorphic.
\end{lemma}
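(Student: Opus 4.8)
The plan is to realise both $|K^\simp|$ and $|K|$ as the same CW complex, exploiting the standard fact that the geometric realisation of a simplicial set is controlled entirely by its \emph{non-degenerate} simplices. First I would identify the non-degenerate simplices of $K^\simp$. A tuple $(x_0,\dots,x_n)\in K^\simp_n$ lies in the image of a degeneracy $s_i$ exactly when $x_i=x_{i+1}$ for some $i$, since $s_i$ is precisely duplication of the $i$th entry. Hence the non-degenerate $n$-simplices are the strictly increasing tuples $x_0<x_1<\dots<x_n$ with $\{x_0,\dots,x_n\}\in\Sigma$. Because the chosen total order on $V$ gives each finite subset a unique increasing enumeration, these correspond bijectively to the $n$-simplices of $K$. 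Moreover, applying $d_i$ to a strictly increasing tuple deletes one vertex and yields again a strictly increasing (hence non-degenerate) tuple whose underlying set is a codimension-one face in $\Sigma$; so under the bijection the face maps of $K^\simp$ match the facet relation of the complex $K$.

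Next I would invoke the structure theorem for realisations: for any simplicial set $S$, every point of $|S|$ has a unique representative $(x,\sigma)$ with $\sigma$ non-degenerate and $x$ in the open simplex $\mathring{\Delta}^n$, and $|S|$ is the CW complex with one $n$-cell $\Delta^n\times\{\sigma\}$ per non-degenerate $n$-simplex $\sigma$, attached along the face maps. Applied to $S=K^\simp$ and compared with the presentation $|K|=\bigsqcup_{\sigma}\Delta^{|\sigma|-1}\times\{\sigma\}/{\sim}$, this shows that both spaces are CW complexes with cells indexed by $\Sigma$ (an $(|\sigma|-1)$-cell for each $\sigma$), and by the previous paragraph their attaching identifications agree.

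Concretely I would write down the comparison map $\phi\colon |K^\simp|\to|K|$ on a representative $(t,(x_0,\dots,x_n))$ with $t=(t_0,\dots,t_n)\in\Delta^n$: set $\sigma=\{x_0,\dots,x_n\}\in\Sigma$ with increasing enumeration $v_0<\dots<v_m$, and send the point to the class of $(u,\sigma)$ where $u_j=\sum_{i:\,x_i=v_j}t_i$. On each cell $\Delta^n\times\{(x_0,\dots,x_n)\}$ this map is affine, hence continuous, so it is continuous for the colimit topology on $|K^\simp|$. The step I expect to require genuine care is checking that $\phi$ descends to the quotient, i.e.\ respects both relations in the excerpt's description of $|K^\simp|$: for the degeneracy relation, duplicating a vertex via $s_i$ and then collapsing by $\delta_i$ (which replaces the pair of coordinates by their sum $t_i+t_{i+1}$) corresponds exactly to the summation defining $u$, and the face relation is handled symmetrically. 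This bookkeeping, matching the coordinate-summation against the degeneracy collapse, is the main obstacle; everything else is formal.

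Finally I would conclude that $\phi$ is a homeomorphism. On a non-degenerate simplex the summation is vacuous, so $\phi$ restricts to the identity on each open cell $\mathring{\Delta}^n\times\{\sigma\}$; by the structure theorem these open cells partition both $|K^\simp|$ and $|K|$, so $\phi$ is a continuous bijection carrying the characteristic map of each cell $\sigma$ to that of $\sigma$ in $|K|$. The restriction of $\phi$ to each closed cell is then a continuous bijection from a compact space onto a Hausdorff closed simplex, hence a homeomorphism, and since both sides carry the colimit topology over their closed cells these cellwise inverses assemble to a continuous inverse. Therefore $\phi$ is a homeomorphism, as required.
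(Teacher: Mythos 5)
Your proposal is correct and follows essentially the same route as the paper, whose proof simply notes that the non-degenerate simplices of $K^\simp$ are in bijection with the simplices of $K$ and that every degenerate simplex comes from a non-degenerate one (citing \cite{C71} for the details you spell out). Your write-up is a careful expansion of exactly that argument, with the explicit coordinate-summing comparison map making the sketch rigorous.
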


\begin{proof}
This is easy to see, since the non-degenerate simplices are in bijection, and all degenerate simplices are in the image of some non-degenerate simplex. For more details, we refer the reader to  \cite{C71}.
\end{proof}

The assignment $K\mapsto K^\simp$ is not functorial, since it depends on  the choice of a total order on $V$. 
One can assign a simplicial set to a simplicial complex  in a functorial way, so that their geometric realisations are homotopy equivalent rather than homeomorphic, however this is at the cost of adding many more simplices. Here we discuss one such functorial assignment, which will play a crucial role in relating a homology theory introduced by Vietoris with blurred magnitude homology, in Section \ref{S:limit}.

\begin{definition}
Let $K=(V,\Sigma)$ be a simplicial complex.
Define 

\[
K^\tot_n=\left\{(x_0,\dots , x_n)\mid \{x_0,\dots, x_n\}\in \Sigma\right \} \, ,
\]
\noindent
 and for  $0\leq i \leq n$ let 
\[
d_i\colon K^{\tot}_{n}\to K^\tot_{n-1}\colon (x_0,\dots , x_n)\mapsto (x_0,\dots , \hat{x_{i}},\dots , x_n) \, ,
\]
where $ \hat{x_{i}}$ means that that entry is missing, and let
\[
s_i\colon K^\tot_{n}\to K^\tot_{n+1}\colon (x_0,\dots , x_{n})\mapsto (x_0,\dots ,x_i,x_i,\dots , x_{n}) \, .
\]

\end{definition}

One has that $\{K^\tot_n\}_{n\in \mathbb{N}}$ together with the maps $d_i$ and $s_i$ is a simplicial set, which we denote by $K^\tot$. Furthermore, we have:

\begin{proposition}\label{L:geom real tot simpl}
The geometric realisations of $K^\tot$, $K^\simp$ and $K$ are pairwise homotopy equivalent.
\end{proposition}

\begin{proof}
The simplicial set $K^\tot$ can be thought of as the analogon of the singular simplicial set associated to a topological space. Thus, it should not be too surprising that the geometric realisation of $K^\tot$ is homotopy equivalent to that of $K$. While this fact is well-known in the algebraic topology community, we were unable to find a reference. Two different proofs of this fact are provided in unpublished notes by Camarena \cite{camarena}. The remaining part of the claim follows from Lemma \ref{L:geom real simpl}.
\end{proof}


\section{Enriched categories and Lawvere metric spaces}\label{S:lawvere}
An ordinary (small) category $C$ is given by a set of objects, and for every pair of objects $x,y$ a set of morphisms $C(x,y)$, together with composition maps
\[
C(x,y)\times C(y,z)\to C(x,z)
\]
and maps assigning to every object $x$ its identity morphism
\[
\{\star\}\to C(x,x) \, ,
\]
 such that the composition of morphisms is associative and the identity morphism for every object is the neutral element for this composition. Let $\boldsymbol{\V}$ be a monoidal category  with tensor product $\otimes_{\boldsymbol{\V}}$ and unit $1_{\boldsymbol{\V}}$. A (small) \define{category enriched over $\boldsymbol{\V}$}  (or \define{$\boldsymbol{\V}$-category}) is  a generalisation of an ordinary category:  we still have  a set of objects, but now  for every pair of objects $x,y$ we are given an object $C(x,y)$ in $\boldsymbol{\V}$, together with composition and identity assigning morphisms in $\boldsymbol{\V}$, namely
 \[
 C(x,y)\otimes_{\boldsymbol{\V}} C(y,z)\to C(x,z)
 \]
 and
 
 \[
1_{\boldsymbol{\V}}\to C(x,x) \, ,
 \]
 which satisfy associativity and unitality conditions. When $\boldsymbol{\V}$ is the category of sets, a category enriched over $\boldsymbol{\V}$ is an ordinary category. We note that while an enriched category is in general not a category, it has an ``underlying''  category, see \cite{kelly} for details.

In \cite{L73} Lawvere observed that  any metric space is an  enriched category:

\begin{definition}
Let  $\boldsymbol{[0,\infty]}^\op$
 denote the symmetric monoidal category with objects given by the extended non-negative real numbers (that is, elements of $[0,\infty]$), exactly one morphism $\epsilon'\to \epsilon$ if $\epsilon'\geq \epsilon$, tensor product given by addition, and unit by $0$. 
A \define{Lawvere metric space} is a small category enriched over $\boldsymbol{[0,\infty]}^\op$.
\end{definition}

In other words, a Lawvere metric space is given by a set $X$, together with for all $x,y\in X$ a number $X(x,y)\in [0,\infty]$, and for all  $x,y,z\in X$ a morphism
\begin{equation}\label{E:lawvere1}
X(x,y)+X(y,z)\to X(x,z)
\end{equation}
as well as a morphism
\begin{equation}\label{E:lawvere2}
0\to X(x,x) \,.
\end{equation}

Equation \eqref{E:lawvere1} is the triangle inequality, while Equation \eqref{E:lawvere2} implies that $X(x,x)=0$. Thus, a Lawvere metric space is the same thing as an extended (since we are allowing infinite distances) quasi-pseudometric space (as distances are not necessarily symmetric, and we allow distinct elements to have zero distance).



\section{Filtered simplicial sets}\label{S:filtered simplicial sets}
Given a metric space $(X,d)$ we are interested in  associating to it filtered simplicial sets, namely  functors
$S(X)\colon {\boldsymbol{[0,\infty]} }\to \sSet$.
Two main examples  that we  consider in this paper are the enriched nerve and the Vietoris--Rips simplicial set.  We next recall their definitions.

\begin{definition}\label{D:enriched nerve metric}
Let $(X,d)$ be a metric space. The \define{enriched nerve} of $X$ is  the functor $N(X)\colon  {\boldsymbol{[0,\infty]} }\longrightarrow \sSet$ such that for any $\epsilon\in [0,\infty]$ the simplicial set $N(X)(\epsilon)$  has  set of $n$-simplices given by 
\[
N(X)(\epsilon)_n=\left\{(x_0,\dots , x_n)\mid x_i\in X \text{, and } \sum_{i=0}^{n-1} d(x_i,x_{i+1})\leq \epsilon)\right\} \, 
\]
and the obvious degeneracy and face maps. Further, for any $\epsilon \leq \epsilon'$ the simplicial maps  
$N(X)(\epsilon \leq \epsilon')\colon N(X)(\epsilon)\to N(X)(\epsilon')$ are the canonical inclusion maps.
\end{definition}

When adding up pairwise lengths of an ordered tuple, we will often talk about the ``length'' of the tuple:

\begin{definition}
Let $(X,d)$ be a metric space. The \define{length} of an ordered tuple $(x_0,\dots , x_n)$ of elements of $X$ is $\sum_{i=0}^{n-1}d(x_i,x_{i+1})$.
\end{definition}

\begin{definition}\label{D:VR}
Let $(X,d)$ be a metric space.
The \define{Vietoris--Rips simplicial set} of $X$ is the functor 
$V^{\tot}(X)\colon  {\boldsymbol{[0,\infty]} }\longrightarrow \sSet$
 with set of $n$-simplices given by
\[
V^\tot(X)(\epsilon)_n=\big \{(x_0,\dots , x_n)\mid d(x_i,x_j)\leq \epsilon \text{ for all } i,j\in \{0,\dots , n\}\big \} \, 
\]
and the obvious degeneracy and face maps. Furthermore, for  any $\epsilon \leq \epsilon'$ the simplicial maps  
$V^\tot(X)(\epsilon \leq \epsilon')\colon V^\tot(X)(\epsilon)\to V^\tot(X)(\epsilon')$ are the canonical inclusion maps.

\end{definition}

\begin{remark}\label{R:why monoidal poset}
We note that we are indeed interested in studying simplicial sets filtered by the monoidal category ${\boldsymbol{[0,\infty]} }$, and not merely by the category associated to the poset  $([0,\infty], \leq)$. Firstly, the enriched nerve is the generalisation of the nerve of a category to the enriched setting, and it can be  defined,  using the Yoneda embedding, as a simplicial object in the category of presheaves $\Set^{\boldsymbol{[0,\infty]}}$, see  Section \ref{SS:enriched nerve}.
 Secondly, as we will explain in the next section, a fundamental observation in  persistent homology is that  functors $\boldsymbol{[0,\infty]} \to \K\Vect$ can be identified with graded modules over a certain monoid ring, and implicit in this identification is the fact that the poset has a monoid structure compatible with the order. The monoidal structure is also crucial for the study of questions related to stability in persistent homology, see \cite{BSS15}.
\end{remark}

\subsection{The  nerve of an enriched category}\label{SS:enriched nerve}

 We recall  the construction of the nerve for enriched categories, and, in particular, for metric spaces.
 The author learned about this construction from John Baez, and the following discussion is due to him.

Given an ordinary category $C$, the \define{nerve} $N(C)$ is a simplicial set
whose $n$-simplices are composable $n$-tuples of morphisms in $C$:
\[      x_0 \stackrel{f_1}{\longrightarrow} x_1 \stackrel{f_2}{\longrightarrow} \; \cdots \;
\stackrel{f_{n-1}}{\longrightarrow} x_{n-1}
\stackrel{f_n}{\longrightarrow} x_n .\]
In other words, the set of $n$-simplices of the nerve is a disjoint union of products:
\begin{equation}
\label{nerve}    N(C)_n = \bigsqcup_{x_0, \dots, x_n \in \ob C} C(x_0, x_1) \times \cdots 
\times C(x_{n-1},x_n) .
\end{equation}
The face maps in $N(C)$ are defined using composition, while the degeneracy maps are
defined using identity morphisms.  

To generalise this concept to categories enriched over an arbitrary monoidal category $\boldsymbol{\V}$ one can  proceed as follows.
The product of sets in Equation \eqref{nerve} should be replaced by the tensor product $\otimes_{\boldsymbol{\V}}$ in $\boldsymbol{\V}$.  
The disjoint union of sets is a special case of a coproduct.  While $\boldsymbol{\V}$ may not have coproducts, the category of presheaves on $\boldsymbol{\V}$, denoted $\widehat{\boldsymbol{\V}}$ does. The objects of this category are functors $F \colon \boldsymbol{\V}^\op \to \Set$, called \define{presheaves} on $\boldsymbol{\V}$.   The morphisms are natural transformations.   

The category $\widehat{\boldsymbol{\V}}$ contains $\boldsymbol{\V}$ as a subcategory via the Yoneda embedding
\[    Y \colon \boldsymbol{\V} \to \widehat{\boldsymbol{\V}} \]
which sends each object $\epsilon \in \ob\boldsymbol{\V}$ to the so-called representable presheaf
\[          \boldsymbol{\V}(-,\epsilon) \colon \boldsymbol{\V}^\op \to \Set. \]
The coproducts in $\widehat{\boldsymbol{\V}}$ are computed objectwise: if $\left\{F_j\right\}_{j \in J}$ is a collection of presheaves on $\boldsymbol{\V}$, their coproduct
is given by
\[        \left(\bigsqcup_{j \in J} F_j\right)(\epsilon) = 
\bigsqcup_{j \in J} F_j(\epsilon)  \]
for all $\epsilon\in \ob\boldsymbol{\V}$, see  \cite[Sec.\ V.3]{lane1998categories} for more details. 
Now we can generalise the nerve 
to a category enriched over
$\boldsymbol{\V}$:

\begin{definition} 
Let $C$ be a $\boldsymbol{\V}$-category. The \define{enriched nerve of $C$} is the functor 
\[    N(C) \colon \boldsymbol{\V}^\op \to \sSet  \]
where for each $\epsilon\in \ob \boldsymbol{\V}$ the set of $n$-simplices
is given by
\begin{equation}\notag
\label{enriched_nerve}    
 N(C)(\epsilon)_n = \bigsqcup_{x_0, \dots, x_n \in \ob C} \boldsymbol{\V}(\epsilon, C(x_0, x_1) \otimes_{\boldsymbol{\V}} \cdots 
\otimes_{\boldsymbol{\V}}  C(x_{n-1},x_n) )  \, .
\end{equation}
  The  maps
\[   d_i \colon N(C)(\epsilon)_n \to N(C)(\epsilon)_{n-1}  \qquad i = 0, \dots , n \]
are defined using composition morphisms in $C$, while the degeneracy maps 
\[     s_i \colon N(C)(\epsilon)_n \to N(C)(\epsilon)_{n+1}  \qquad i = 0 , \dots , n \]
are defined using identity-assigning morphisms, all in a manner closely mimicking the usual
nerve. 
\end{definition}

When $\boldsymbol{\V} = \boldsymbol{[0,\infty]}^\op$ and $X$ is a  $\boldsymbol{\V}$-category, the set
\[   \boldsymbol{\V}\big(\epsilon, X(x_0, x_1) \otimes_{\boldsymbol{\V}} \cdots  \otimes_{\boldsymbol{\V}} X(x_{n-1},x_n) \big) = 
  \boldsymbol{\V}\big(\epsilon,  d(x_0, x_1) + \cdots + d(x_{n-1},x_n) \big)  \]
is a singleton if
\[
    \epsilon\ge d(x_0, x_1) + \cdots + d(x_{n-1},x_n)  \]
and empty otherwise.  Thus, we have a canonical isomorphism

\begin{equation}\label{E:enr ner}
    N(X)_n(\epsilon) \cong \Big\{ (x_0, \dots, x_n) \mid \; d(x_0, x_1) + \cdots + d(x_{n-1},x_n) \le \epsilon\Big\}.
 \end{equation}

We can take the isomorphic set in \eqref{E:enr ner} as the set of $n$-simplices in the enriched nerve $N(X)$ associated to a metric space $X$, and thus obtain the enriched nerve as defined in Definition \ref{D:enriched nerve metric}.

\section{Persistent vs.\ graded objects}\label{S:persistent graded}

Our aim is 
 to study the homology of  filtered simplicial sets such as those introduced in Section \ref{S:filtered simplicial sets}, and we are thus interested in functors ${\boldsymbol{[0,\infty]} }\to \ch_\Ab$. 
 Since such functors are the central object of study in persistent homology,  we introduce the following definition:

\begin{definition}\label{D:persistent objects}
Let $C$ be a small category, and $(P,\leq, +)$ a monoidal poset, that is, a poset together with a monoid structure compatible with the order. We identify $(P,\leq, +)$  with the symmetric monoidal category $\boldsymbol{P}$ with objects given by the elements of $P$, exactly one morphism $p'\to p$  if $p'\leq p$, and tensor product given by $+$.  A  functor $ \boldsymbol{P}\to C$ is called a \define{$\boldsymbol{P}$-persistent} element of the set of objects of $C$. 
\end{definition}

\begin{example}\label{E: persistent modules}
Consider $(\N,\leq, +)$ where  $\leq$ and $+$ are the usual order and addition on the natural numbers. Further, let $C=\K \Vect$ be the category of vector spaces over a field $\K$ together with $\K$-linear maps. There is an isomorphism of categories between the functor category of $\NN$-persistent vector spaces over $\K$ and the category of $\N$-graded modules over the polynomial ring $\K[x]$. 
Similarly, when we consider the monoidal poset  $([0,\infty],\leq,+)$ where  $\leq$ and $+$ are the usual order and addition on real numbers, there is an isomorphism of categories between the functor category of $\boldsymbol{[0,\infty]}$-persistent vector spaces and the category of  modules graded by $([0,\infty],\leq +)$ over the monoid ring $\K[([0,\infty],+)]$. Furthermore, finitely presented modules correspond to persistent vector spaces of ``finitely presented type'' \cite{CK17}. This is known as the Correspondence Theorem in the persistent homology literature, and $\mathbb{N}$-, as well as $\boldsymbol{[0,\infty]}$-persistent vector spaces are usually called \define{persistence modules}.
\end{example}
 
 We will see that in  magnitude homology one ``forgets'' the information given by the inclusion maps in the filtration of a simplicial sets, and thus the  chain complexes that one ends up with are more properly \emph{graded} objects, rather than persistent objects.
 
 \begin{definition}
 Let $C$ be a small category, and $I$ a set, which we identify with the discrete category I with objects given by the elements of $I$ and  no morphisms apart from the  identity morphisms. A functor  $ I\to C$ is called an \define{$I$-graded} element of the set of objects of $C$. 
 \end{definition}
 
 If $C$ has all coproducts,  one can characterise such functors as follows:
 
 \begin{proposition}
 Let $C$ be a small category with all coproducts, and let $I$ be a set. There is an isomorphism of categories between the functor category of $I$-graded objects of $C$ and the category with objects pairs $(c,\{c_{i}\}_{i\in I})$ such that $c$ is isomorphic to the coproduct of $\{c_{i}\}_{i\in I}$, and morphisms $(c,\{c_{i}\}_{i\in I})\to (c',\{c'_{i}\}_{i\in I})$ given by $\{f_i\}_{i\in I}$ where for each $i\in I$ we have that $f_i\colon c_i\to c'_i$ is a morphism in $C$. 
 \end{proposition}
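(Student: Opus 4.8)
The plan is to exhibit two functors between the two categories and check that they are mutually inverse. Write $D$ for the target category, whose objects are pairs $(c,\{c_i\}_{i\in I})$ with $c$ isomorphic to the coproduct of the family and whose morphisms $(c,\{c_i\})\to(c',\{c'_i\})$ are families $\{f_i\colon c_i\to c'_i\}_{i\in I}$. The first observation is that an $I$-graded object is exactly a functor out of the discrete category $I$, that is, nothing more than a family $\{c_i\}_{i\in I}$; and a natural transformation between two such functors is, since $I$ carries only identity morphisms, exactly a family $\{f_i\colon c_i\to c'_i\}_{i\in I}$. Hence the functor category $\Fun(I,C)$ is the product category $C^I$, and the content of the statement is that $D$ records the same data decorated by a chosen coproduct object.

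First I would define the forgetful functor $\Psi\colon D\to C^I$ by $\Psi(c,\{c_i\})=\{c_i\}$ and $\Psi(\{f_i\})=\{f_i\}$; functoriality is immediate because composition and identities in $D$ are computed componentwise and never refer to the first coordinate. In particular $\Psi$ is literally the identity on morphisms, hence full and faithful. Next I would define $\Phi\colon C^I\to D$: since $C$ is small its objects form a set and $C$ has all coproducts, so I can choose once and for all a coproduct $\coprod_{i\in I}c_i$ for each family, and set $\Phi(\{c_i\})=(\coprod_{i\in I}c_i,\{c_i\})$ and $\Phi(\{f_i\})=\{f_i\}$. It is worth noting that, because morphisms in $D$ are plain componentwise families, the universal property of the coproduct is not needed to define $\Phi$ on morphisms; it enters only to guarantee that $\Phi$ lands in $D$, which for the chosen coproduct is automatic. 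Functoriality of $\Phi$ is then clear.

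It then remains to compare the two composites. The composite $\Psi\Phi$ is the identity of $C^I$ on the nose, both on objects and on morphisms, so that is routine. The hard part, and the main obstacle, is the other composite: $\Phi\Psi$ sends $(c,\{c_i\})$ to $(\coprod_{i}c_i,\{c_i\})$ formed with the chosen coproduct, and this agrees with $(c,\{c_i\})$ only up to the canonical isomorphism $c\cong\coprod_{i}c_i$. Strictly, then, one obtains only an equivalence of categories. What saves the literal statement is precisely that the hom-sets of $D$ depend only on the second coordinate, so the family $\{\mathrm{id}_{c_i}\}_{i\in I}$ is a natural isomorphism $\Phi\Psi\Rightarrow\mathrm{id}_D$ whose components are identities on the graded part; adopting the convention that the distinguished object $c$ of a pair is taken to be the chosen coproduct makes $\Phi\Psi=\mathrm{id}_D$ hold on the nose, and $\Phi,\Psi$ are then mutually inverse. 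This is the one place where care must be taken to justify ``isomorphism'' rather than merely ``equivalence'' of categories.
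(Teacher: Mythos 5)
The paper states this proposition without proof, so there is nothing of the author's to compare against; judged on its own, your argument is correct and is surely the intended one: identify $I$-graded objects with $C^I$, note that morphisms in the pair category never refer to the first coordinate, and build the evident pair of functors $\Phi,\Psi$. The point you flag at the end deserves emphasis, because it is a genuine defect of the statement as written rather than of your proof. Since objects of the target category $D$ are pairs $(c,\{c_i\}_{i\in I})$ with $c$ only required to be \emph{isomorphic} to a coproduct, $\Psi$ collapses all pairs sharing the same graded part, and in general no functor can be bijective on objects: take $C$ to be the indiscrete category on two objects (which has all coproducts) and $I$ a singleton; then $C^I$ is indiscrete on two objects while $D$ is indiscrete on four, so the two categories are equivalent but not isomorphic. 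Thus the literal claim of an \emph{isomorphism} of categories fails, and your repair --- reading ``the coproduct'' as a chosen coproduct, equivalently restricting $D$ to pairs whose first coordinate is that chosen coproduct --- is exactly what is needed to make $\Phi$ and $\Psi$ strictly mutually inverse; without it, your family $\{\mathrm{id}_{c_i}\}_{i\in I}\colon \Phi\Psi\Rightarrow \mathrm{id}_D$ correctly exhibits an equivalence, which is all the paper actually uses (Example \ref{Ex:graded chain complex} and Remark \ref{R:graded pers} only need the identification up to isomorphism, and indeed the paper applies the proposition to $\ch_\Ab$, which is not even small, so the hypotheses are already being read informally). One last pedantic point in your favour: your observation that the universal property of the coproduct is never invoked on morphisms is accurate, and it explains why the proposition is really a statement about decorating $C^I$ with redundant data rather than about coproducts doing any work.
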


\begin{example}\label{Ex:graded chain complex}
When $I=[0,\infty]$, we have that a $[0,\infty]$-graded chain complex of abelian groups can be identified with a chain complex of $[0,\infty]$-graded abelian groups, because coproducts of chain complexes are computed componentwise.
\end{example}

Thus, while a $[0,\infty]$-graded vector space over $\K$ is simply a vector space $V$ together with a direct sum decomposition $V=\oplus_{l\in [0,\infty]}V_l$, we have that a $\boldsymbol{[0,\infty]}$-persistent vector space over $\K$ is a $[0,\infty]$-graded vector space together  with an action of the monoid ring $\K[([0,\infty],+)]$,  which corresponds to the information given by the non-trivial maps $\epsilon\longrightarrow \epsilon'$ whenever $\epsilon\leq \epsilon'$.

\begin{remark}\label{R:graded pers}
Given a monoidal poset $(P,\leq,+)$, and a category $C$ with zero morphisms, we can identify any $P$-graded object in $C$ with a $\boldsymbol{P}$-persistent object in a canonical way.  Namely, consider the full subcategory of the functor category $\Fun(\boldsymbol{P},C)$, given by all functors that send every morphism to the zero morphism in $C$. Then this category is easily seen to be isomorphic to the category of $P$-graded objects of $C$, that is, the functor category $\Fun(P,C)$.
\end{remark}

\section{Coends}\label{S:coends}
One of the main ingredients in the definition of blurred magnitude homology that we will give in Section \ref{S:MH and PH} is the coend, a construction that is ubiquitous in category theory. For ease of reference we briefly recall its definition here.

Intuitively, given a bivariate functor with mixed variance $F\colon D^\op\times D\to C$, its coend is an object in $C$ that identifies the ``left action'' of $F$ with the ``right action'' of $F$; for instance, the tensor product of a left module with a right module over a ring is an example of coend, see  \cite[Section IX.6]{lane1998categories}. 

While one can define a coend in this general setting, we will make use of the following characterisation of coends in the case that $D$ is cocomplete and $C$ small.

\begin{definition}

Suppose that $D$ is a cocomplete category, and $C$ is a small category. Given a functor ${F\colon C^\op\times C\to D}$, its \define{coend} is the coequaliser of the diagram

\[
\begin{tikzpicture}
\node (A) at (0,0) {$\underset{f\colon c\to c'}{\bigsqcup} F(c',c)$};
\node (B) at (3,0) {$\underset{c\in C}{\bigsqcup} F(c,c)\, ,$};

\draw[transform canvas={yshift=1.5ex},->]
(A) -- (B) node[above,midway]{$$};
\draw[transform canvas={yshift=0.5ex},->]
(A) -- (B) node[below,midway]{};
\end{tikzpicture}
\]
\noindent
where the two parallel morphisms are the unique morphisms  induced by the morphisms $F(f,1_{c})\colon F(c',c)\to F(c,c)$, and  ${F(1_{c'},f)\colon F(c',c)\to F(c',c')}$, respectively. If $D$ has additionaly the structure of a monoidal category together with tensor product $\otimes$, then given two functors $L\colon C^{\op}\to D$ and $R\colon C\to D$, we denote the coend of $L\otimes R$ by $L\otimes_C R$. This coend is often referred to as  the \define{functor tensor product} of $L$ and $R$.
\end{definition}

For more details on coends we refer the reader to  \cite[Section IX.6]{lane1998categories}, as well as the  survey \cite{loregian}.


\section{Magnitude homology}\label{S:MH}
Hepworth and Willerton introduced magnitude homology for graphs in \cite{HW17} as the categorification of the magnitude of  a finite metric space associated to a graph. 
Subsequently, Leinster and Shulman generalised magnitude homology to arbitrary finite metric spaces   \cite{LS17}. Here we first briefly recall the definition of magnitude homology as given in \cite{LS17}, and then we give an alternative equivalent definition that 
will serve as the starting point 
 to relate persistent homology to magnitude homology.
 
 \subsection{Magnitude homology for arbitrary finite metric spaces}\label{SS:hochschild}
Instead of with $\boldsymbol{[0,\infty]} $, Leinster and Shulman choose to work with the category $\boldsymbol{[0,\infty)} $  with set of objects given by the non-negative real numbers $[0,\infty)$, with exactly one morphism $\epsilon\longrightarrow \epsilon'$ whenever $\epsilon\leq \epsilon'$, tensor product given by addition, and unit by $0$. See \cite[Section 2]{LS17} for an explanation. Here we will adopt the same choice. In this setting we have that a $\boldsymbol{[0,\infty)}^\op $-category is a quasi-pseudometric space. Leinster and Shulman then give the following definition:

\begin{definition:magA}\cite[Section 3]{LS17}: \label{D:MHA} Let $(X,d)$ be a finite quasi-pseudometric space. 
The \define{magnitude homology} of $X$ is the homology of the chain complex $M(X)$ of $[0,\infty)$-graded abelian groups defined as follows:

\begin{equation}\label{E:bar construction}
M( X)_n =\bigoplus_{l\in [0,\infty)}\Z\left [\left\{(x_0,\dots , x_n) \mid \sum_{i=0}^{n}d(x_i,x_{i+1})=l \right\}\right] \,  .
\end{equation}
Thus,  in degree $n$ it is the free $[0,\infty)$-graded abelian group, which  in degree $l$ is generated by the ordered tuples $(x_0,\dots , x_n)$ of length exactly $l$. Furthermore, the boundary map $d_n\colon M ( X)_n \to M ( X)_{n-1}$ is given by the alternating sum of maps $d_n^i$, defined as follows  for all   $1\leq i \leq n-1$: 
\[
d_n^i((x_0,\dots , x_n))=\begin{cases}
(x_0,\dots ,x_{i-1},x_{i+1},\dots  x_n), & \text{if } d(x_{i-1},x_i)+d(x_i,x_{i+1})=d(x_{i-1},x_{i+1})\\
0, & \text{otherwise}
\end{cases}
\]
while for $i=0$ we have
\begin{alignat}{2}
d_n^0((x_0,\dots , x_n))=\begin{cases}
(x_1,x_2,\dots  x_n), & \text{if } d(x_{0},x_1)=0\\
0, & \notag \text{otherwise}
\end{cases}
\end{alignat}
\noindent
and similarly for $i=n$.

\end{definition:magA}

The assignment $X\mapsto H_\star(M(X))$ induces a functor from the category with objects $\boldsymbol{[0,\infty)}$-categories and morphisms $\boldsymbol{[0,\infty)}$-functors to the category of $[0,\infty)$-graded abelian groups \cite[Theorem 5.12]{LS17}.
Futhermore, for finite quasi-pseudometric spaces $X$, the magnitude homology of $X$ categorifies the magnitude of $X$ with respect to the canonical size function, see
\cite[Theorem 3.5, Corollary 7.15]{LS17}.

\subsection{Magnitude homology: an alternative viewpoint}\label{SS:alternative}

In online discussions \cite{ncafe} Leinster and Shulman initially  gave a different definition of magnitude homology. Here we recall this definition (Definition B), and prove that an adaptation of it (Definition B') agrees with the definition given in the previous section (Definition A). We will  use Definition B' of magnitude homology to relate magnitude homology to persistent homology.

Denote by $\Ab$  the category of abelian groups with monoidal structure given by the  tensor product of abelian groups, which we denote by $\boxtimes$; this induces a monoidal structure on the category of chain complexes over $\Ab$, which we again denote by $\boxtimes$.
Given a $\boldsymbol{[0,\infty)}^\op$-category $X$, Leinster and Shulman  consider the following functor
\begin{equation}\label{E:magn cc}
CN(X)=\left (\boldsymbol{[0,\infty)} \overset{N(X)}{\longrightarrow} \sSet  \overset{\Z[ \cdot ]}{\longrightarrow} \sAb \overset{U}{\longrightarrow} \ch_\Ab \right ) \, ,
\end{equation}
\noindent
where the functor $\Z[ \cdot ]$ is induced  by the free abelian group functor, and the functor $U$ is the functor that sends a simplicial abelian group to its unnormalised chain complex. They then introduce functors of coefficients $A\colon \boldsymbol{[0,\infty)}^\op \longrightarrow \Ab$, 
where one views $A$ as taking values in $\ch_\Ab$ through the canonical inclusion $\Ab\hookrightarrow \ch_\Ab$, and  give the following definition:

\begin{definition:magB}\label{D:MHB}
The \define{magnitude homology of $X$ with coefficients in $A$} is the homology of the chain complex given by the coend $CN(X)\otimes_{\boldsymbol{[0,\infty)}} A$.
\end{definition:magB}

One can describe this chain complex as follows for a particular choice of coefficient functor.

\begin{lemma}\label{P: equivalence MH defn}

For any $\epsilon\in [0,\infty)$ define the following functor of coefficients
\begin{alignat}{2}
A_\epsilon\colon \boldsymbol{[0,\infty)}^\op &\notag\to \Ab  \\
l&\notag \mapsto 
\begin{cases}
\Z,  \,\text{if }l=\epsilon\\
0, \,\text{otherwise}
\end{cases}\\
(\ell\geq \ell') &\notag \mapsto 
\begin{cases}
\id_\Z, \, \text{if } \ell=\ell'=\epsilon\\
0, \,\text{otherwise} \; .
\end{cases}
\end{alignat}
We consider $A_\epsilon$ as taking values in $\ch_\Ab$ through the canonical inclusion functor ${\Ab\hookrightarrow \ch_\Ab}$.

Then, the chain complex $CN(X)\otimes_{\boldsymbol{[0,\infty)}} A_\epsilon$ is given in degree $n$ by the free abelian group on the tuples $(x_0,\dots , x_n)$ that have length exactly $\epsilon$. The boundary maps 
\[
d_n\colon {\left (CN(X)\otimes_{\boldsymbol{[0,\infty)}} A_\epsilon\right )}_n \to \left ({CN(X)\otimes_{\boldsymbol{[0,\infty)}} A_\epsilon}\right )_{n-1}
\] are  alternating sums of maps $d^i_n$ which can be described as follows, for $0<i<n$:
\begin{alignat}{2}
d^i_n( 
(x_0,\dots , x_n))&\notag=  \begin{cases}
(x_0,\dots , x_{i-1},x_{i+1},\dots , x_n), &\text{if } d(x_{i-1},x_{i+1})=d(x_{i-1},x_i)+d(x_i,x_{i+1})\\
0, &\text{otherwise}
\end{cases}
\end{alignat}
\noindent
while for $i=0$ we have
\begin{alignat}{2}
d^0_n\colon {\left (CN(X)\otimes_{\boldsymbol{[0,\infty)}} A_\epsilon\right )}_n&\notag \to {\left (CN(X)\otimes_{\boldsymbol{[0,\infty)}} A_\epsilon\right )}_{n-1}\\
(x_0,\dots , x_n)&\notag \mapsto \begin{cases}
(x_1, x_2,\dots , x_n), & \text{if } d(x_0,x_1)=0\\
0, & \text{otherwise}
\end{cases}
\end{alignat}
and similarly for $i=n$.

\end{lemma}

\begin{proof}\footnote{We note that parts of this proof were given by Shulman in an online comment \cite{ncafe2}.} 
The coend $CN(X)\otimes_{\boldsymbol{[0,\infty)}} A_\epsilon$ is the coequaliser of the following diagram:

\[
\begin{tikzpicture}
\node (A) at (0,0) {$\underset{\ell \leq \ell'}{\bigoplus} CN(X)(\ell)\boxtimes A_\epsilon(\ell')$};
\node (B) at (9,0) {$\underset{\ell\in [0,\infty)}{\bigoplus} CN(X)(\ell)\boxtimes A_\epsilon(\ell)\, .$};
\draw[transform canvas={yshift=1.5ex},->]
(A) -- (B) node[above,midway]{$\underset{\ell \leq \ell'}{\bigoplus}  CN(X)(\ell \leq \ell')\boxtimes 1$};
\draw[transform canvas={yshift=0.5ex},->]
(A) -- (B) node[below,midway]{$\underset{\ell \leq \ell'}{\bigoplus}  1\boxtimes A_\epsilon(\ell \leq \ell')$};
\end{tikzpicture}
\]

Thus, it is the coproduct over $\ell\in [0,\infty)$ of the chain complexes $CN(X)(\ell)\boxtimes A_\epsilon(\ell)$, modulo the relations given by equating the two parallel morphisms on the left hand side. By definition of $A_\epsilon$, the morphisms are both trivial if $\ell'\ne \epsilon$, thus we assume that $\ell'=\epsilon$. The two morphisms are identical if $\ell=\epsilon$, thus we assume that $\ell\ne \epsilon$, and so we have $\epsilon> \ell$. Thus, the bottom parallel morphism is zero, while the top parallel morphism is 
\[
CN(X)(\ell)\longrightarrow CN(X)(\epsilon)\, .
\]
Furthermore, we have
\[ \bigoplus_{\ell\in [0,\infty)} CN(X)(\ell)\boxtimes A_\epsilon(\ell)
 \cong CN(X)(\epsilon)
\]
since tensoring with $A_\epsilon(\ell)$ makes all summands vanish, except if $\ell=\epsilon$.  Thus, in degree $n$ the chain complex $CN(X)\otimes_{\boldsymbol{[0,\infty)}} A_\epsilon$ is the free abelian group on the tuples $(x_0,\dots , x_n)$ that have length exactly $\epsilon$. 

Now, denote by $D(\epsilon)$ the subcomplex of $CN(X)(\epsilon)$ whose $n$-chains are the $n$-tuples with length strictly less than $\epsilon$, so that $CN(X)\otimes_{\boldsymbol{[0,\infty)}} A_\epsilon\cong CN(X)(\epsilon)/D(\epsilon)$ by the previous discussion.
Note that the boundary map on the quotient chain complex $CN(X)(\epsilon)/D(\epsilon)$ is the alternating sum of maps
\[
d^i_n\colon CN(X)(\epsilon)_n/D(\epsilon)_n\to CN(X)(\epsilon)_{n-1}/D(\epsilon)_{n-1}
\]
which send $c+D(\epsilon)_n$ to $d^{i,C}_n(c)+D(\epsilon)_{n-1}$, where $d^{i,C}_n\colon CN(X)(\epsilon)_n\to CN(X)(\epsilon)_{n-1}$ is the map induced by the $i$th face map. Thus $d^i_n(x_0,\dots , x_n)$ is the map induced by the $i$th face maps if and only if by deleting the $i$th entry the length of the tuple is unchanged, and is the zero map otherwise.

\end{proof}

Our aim is to relate Definition B with Definition A. For any $\epsilon\leq \epsilon'$ define the natural transformation $\iota\colon A_{\epsilon}\Rightarrow A_{\epsilon'}$ where $\iota_\ell$ is the identity if $\ell=\epsilon=\epsilon'$, and the zero map otherwise. This induces a chain map 
\[{CN(X)\otimes_{\boldsymbol{[0,\infty)}} A_\epsilon\to CN(X)\otimes_{\boldsymbol{[0,\infty)}} A_{\epsilon'}}\, ,
\] 
and we thus have a functor 
\[
{CN(X)\otimes_{\boldsymbol{[0,\infty)}} A_{-}\colon \boldsymbol{[0,\infty)}\to \ch_\Ab}
\] that assigns to a number $\epsilon$  the chain complex $CN(X)\otimes_{\boldsymbol{[0,\infty)}} A_{\epsilon}$.

Recall that a chain complex   of $[0,\infty)$-graded abelian groups 
can be identified with an $[0,\infty)$-graded chain complex (see Example \ref{Ex:graded chain complex}). 
Thus, in particular, we can identify the chain complex $M(X)$ with a functor $\boldsymbol{[0,\infty)}\to \ch_\Ab$ that coincides with  $M(X)$ on the set of objects, and sends every non-identity morphism to the trivial chain map (see Remark \ref{R:graded pers}). We have:

\begin{proposition}\label{P:MH eq}
The functors $CN(X)\otimes_{\boldsymbol{[0,\infty)}} A_{-}$ and $M(X)$ are isomorphic. In particular, the magnitude homology of $X$ (Definition A) is isomorphic to the homology of $CN(X)\otimes_{\boldsymbol{[0,\infty)}} A_{-}$.
\end{proposition}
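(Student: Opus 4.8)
The plan is to reduce the asserted isomorphism of functors $\PPP^\op\to\ch_\Ab$ to an objectwise (graded) comparison, and then to read that comparison off directly from the two explicit descriptions already in hand: Equation~\eqref{E:bar construction} for $B(\Sigma X)$ and Lemma~\ref{P: equivalence MH defn} for $CN(X)\otimes_\PPP A_\epsilon$. First I would record that both functors send every non-identity morphism of $\PPP^\op$ to the zero chain map --- for $B(\Sigma X)$ this is exactly how it is regarded as a persistent object via Remark~\ref{R:graded pers}, and for $CN(X)\otimes_\PPP A_{-}$ it is the content of the transformation $\iota$ vanishing on non-identities. Hence both lie in the full subcategory of $\Fun(\PPP^\op,\ch_\Ab)$ consisting of functors killing all non-identity morphisms, which by Remark~\ref{R:graded pers} is isomorphic to the category of $[0,\infty)$-graded chain complexes. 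Under this isomorphism of categories it suffices to produce an isomorphism of the associated $[0,\infty)$-graded chain complexes; naturality of the resulting transformation is then automatic, since every naturality square attached to a non-identity morphism has zero maps along two opposite edges.

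I would construct this graded isomorphism degree by degree. In homological degree $n$ and grading $\epsilon$, Lemma~\ref{P: equivalence MH defn} identifies $CN(X)\otimes_\PPP A_\epsilon$ with the free abelian group on the ordered tuples $(x_0,\dots,x_n)$ of length exactly $\epsilon$, while Equation~\eqref{E:bar construction} identifies the grading-$\epsilon$ summand of $B(\Sigma X)_n$ with the free abelian group on the very same set of tuples. I would take $\phi$ to be the map sending each generating tuple to itself; this is visibly an isomorphism of graded abelian groups in each degree. It remains to check that $\phi$ is a chain map, which reduces to matching the individual face-type maps $d_n^i$, since both differentials are the alternating sum $\sum_i(-1)^i d_n^i$. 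For $0<i<n$ the formula in Definition~A deletes the $i$-th vertex exactly when $d(x_{i-1},x_i)+d(x_i,x_{i+1})=d(x_{i-1},x_{i+1})$ and returns $0$ otherwise, which is verbatim the condition in Lemma~\ref{P: equivalence MH defn}; for $i=0$ (and symmetrically $i=n$) both delete the endpoint precisely when the adjacent distance vanishes. Thus the $d_n^i$ agree on generators on both sides, $\phi$ commutes with the differentials, and transporting $\phi$ back through Remark~\ref{R:graded pers} gives the desired natural isomorphism $CN(X)\otimes_\PPP A_{-}\cong B(\Sigma X)$. The final sentence then follows, as a natural isomorphism of functors into $\ch_\Ab$ induces one after applying $H_\star$, and $H_\star(B(\Sigma X))$ is the magnitude homology of Definition~A.

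I do not expect a serious obstacle here: the substantive work has already been done in Lemma~\ref{P: equivalence MH defn} and in the recollection of Equation~\eqref{E:bar construction}, so the argument is essentially the identification of two matching explicit descriptions. The only points needing care are bookkeeping ones: invoking Remark~\ref{R:graded pers} so that naturality comes for free rather than being verified square by square, and confirming that both constructions use \emph{unnormalised} chains, so that degenerate tuples (for instance those with repeated consecutive entries) occur as generators on both sides and the underlying graded groups genuinely coincide.
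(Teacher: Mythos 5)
Your proposal is correct and takes essentially the same approach as the paper's proof: the objectwise identification comes from Lemma \ref{P: equivalence MH defn} together with \eqref{E:bar construction}, and naturality holds because both functors send every non-identity morphism of $\PPP^\op$ to the zero chain map, so every comparison square commutes. Your additional bookkeeping --- matching the face maps $d_n^i$ generator by generator and routing the naturality argument through Remark \ref{R:graded pers} --- merely spells out what the paper compresses into the phrase ``canonically isomorphic'' and its commuting-square diagram.
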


\begin{proof}
By Lemma \ref{P: equivalence MH defn} and \eqref{E:bar construction} the chain complexes $M(X)_\epsilon$ and $CN(X)\otimes_{\boldsymbol{[0,\infty)}} A_\epsilon$ are canonically isomorphic. Next, for $\epsilon\leq \epsilon'$, consider the following diagram:
\[
\begin{tikzpicture}
\node (1) at (0,0) {$M(X)_\epsilon$};
\node (2) at (4,0) {$M(X)_{\epsilon'}$};
\node (3) at (0,-2) {$CN(X)\otimes_{\boldsymbol{[0,\infty)}} A_\epsilon$}; 
\node (4) at (4,-2) {$CN(X)\otimes_{\boldsymbol{[0,\infty)}} A_{\epsilon'}$}; 
\draw[->]
(1) edge  (2)
(1) edge  (3)
(3) edge  (4)
(2) edge  (4);
\end{tikzpicture}
\]
\noindent
where the vertical arrows are  the canonical isomorphisms, while the horizontal arrows are zero maps. Thus every such square commutes, so the canonical isomorphisms assemble into a natural isomorphism between $CN(X)\otimes_{\boldsymbol{[0,\infty)}} A_{-}$ and $M(X)$.
\end{proof}
For ease of reference, we state here the equivalent definition of magnitude homology, as given by Proposition \ref{P:MH eq}:

\begin{definition:magBb}\label{D:magBb}
The \define{magnitude homology of $X$} is the homology of the $[0,\infty)$-graded chain complex  $CN(X)\otimes_{\boldsymbol{[0,\infty)}} A_{-}$. 

\end{definition:magBb}

\section{Persistent homology}\label{S:PH}
Persistent homology is, in an appropriate sense, the generalisation of simplicial homology of a simplicial set to persistent simplicial sets. Given a metric space $(X,d)$, we seek to study its geometric and topological  properties by associating to it $\boldsymbol{[0,\infty)}$-persistent simplicial sets $S(X)$.
  
We then consider  the functor
\[
CS(X)=\left (\boldsymbol{[0,\infty)} \overset{S(X)}{\longrightarrow} \sSet  \overset{\Z[ \cdot ]}{\longrightarrow} \sAb \overset{U}{\longrightarrow} \ch_\Ab \right ) \, ,
\]

\noindent
where $\Z[\cdot]$ and $U$ are defined as in \eqref{E:magn cc}. 

Let  $\F$ be a field. 
Consider the constant functor of coefficients
\begin{align}
A\colon \boldsymbol{[0,\infty)} &\notag \to  \ch_\Ab 
\end{align}

\noindent
that sends $\ell$ to the chain complex with a copy of $\F$ concentrated in degree zero, and sends $ \ell \leq\ell'$ to the identity chain map.

The composite $H_\star(CS(X)\boxtimes A)$,  where $H_\star\colon \ch_\Ab\to \Ab$ is the usual homology functor,  is usually called the  ``persistent homology of $X$ (with respect to $S$) with coefficients in $\F$.'' 
Using this coefficient functor has the advantage that, under appropriate finiteness conditions, isomorphism classes of such functors can be completely characterised by a collection of intervals, called the \define{barcode}, see e.g., \cite[Theorem 1.9]{Ou15}. We note that there are many different types of filtered spaces that are used in applications of persistent homology, see \cite[Table 1]{roadmap} for an overview of some of these. To be useful in applications, such spaces have to satisfy   theoretical guarantees dictated by what is called ``topological inference'', see  \cite[Chapter 2 and 5]{Ou15}.

More generally,  we give the following definition:

\begin{definition} Let $(X,d)$ be a metric space,  let $S(X)$ be  a $\boldsymbol{[0,\infty)}$-persistent simplicial set, and $A\colon \boldsymbol{[0,\infty)}\to \ch_\Ab$ a functor. The \define{persistent homology of $X$ with respect to $S$ and with coefficients in $A$} is the composition $H_\star(CS(X)\boxtimes A)$. When $A$ is the unit for $\boxtimes$  we call the homology of $CS(X)\boxtimes A$  the  \define{persistent homology of $X$ (with respect to $S$)}.
\end{definition}

For arbitrary coefficient functors one in general no longer has a barcode. 
However, 
such  functors of coefficients might be interesting for applications, as they might allow to capture more refined information, for instance different torsion or orientability phenomena over different filtration scales, which might be detected  by taking, e.g., coefficients over $\F_2$ (the field with two elements) over a certain interval $I\subset [0,\infty)$, and coefficients over $\F_3$ over a different disjoint interval $J\subset [0,\infty)$. More complicated coefficient functors might allow for an even more refined analysis.

\section{Magnitude meets persistence}\label{S:MH and PH}

In the final section of \cite{LS17} Leinster and Shulman list a series of open problems; two of these problems, as stated in a first version of their manuscript  are as follows (these problems now appear with a change of wording in Section 8 of the published version of the manuscript):
{\it
\noindent
\begin{itemize}  
\item Magnitude homology only ``notices'' whether the triangle inequality is a strict equality or not. Is there a ``blurred'' version that notices ``approximate equalities''?
\item Almost everyone who encounters both magnitude homology and persistent homology feels that there should be some relationship between them. What is it?
\end{itemize}
}
\label{questions}

In this section we attempt a first answer to these questions, which we believe are intertwined: it is the blurred version of magnitude homology that is related to persistent homology. Indeed, as is apparent from Proposition \ref{P:MH eq}, the magnitude homology of a metric space $X$ is a homology theory that in a certain sense forgets the  maps induced on the homology groups by the inclusions of simplicial sets $N(X)(\epsilon)\to NX(\epsilon')$, whenever $\epsilon\leq \epsilon'$, whereas the ``persistence'' in persistent homology is exactly the information given by such maps. Thus, morally, these are very different homology theories.
 
Our starting point  is Definition B' of magnitude, which we adapt to coefficient functors not supported at points, but on intervals. 
\begin{definition}

For any $\epsilon\in [0,\infty]$ define the  functor of coefficients
\begin{alignat}{2}
A_{[0,\epsilon]}\colon 
\boldsymbol{[0,\infty)}^\op  &\notag \to \Ab\\
\ell &\notag \mapsto \begin{cases}
\Z,  \,\text {if } \ell\in [0,\epsilon]\\
0, \,\text{otherwise}
\end{cases}\\
(\ell\geq \ell') &\notag \mapsto 
\begin{cases}
\id_\Z, \, \ell,\ell'\in [0,\epsilon] \\
0, \,\text{otherwise} \; .
\end{cases}
\end{alignat}

\noindent
We  consider $A_{[0,\epsilon]}$ as taking values in $\ch_\Ab$ through the canonical inclusion functor $\Ab\hookrightarrow \ch_\Ab$.
\end{definition}

Now, for any $\epsilon\leq \epsilon'$ we consider the natural transformation $\iota\colon A_{[0,\epsilon]}\Rightarrow A_{[0,\epsilon']}$ where $\iota_\ell$ is the identity if $\ell\in [0,\epsilon]$, and the zero map otherwise. This natural transformation induces a chain map ${CN(X)\otimes_{\boldsymbol{[0,\infty)}} A_{[0,\epsilon]}\to CN(X)\otimes_{\boldsymbol{[0,\infty)}} A_{[0,\epsilon']}}$, 
and we thus have a functor ${CN(X)\otimes_{\boldsymbol{[0,\infty)}} A_{[0,-]}\colon \boldsymbol{[0,\infty)}\to ch_\Ab}$ that assigns to a number $\epsilon$  the chain complex ${CN(X)\otimes_{\boldsymbol{[0,\infty)}} A_{[0,\epsilon]}}$\footnote{We note that, more generally, coends are functorial, see for instance \cite[Remark 1.1.7]{loregian}.}. 
Explicitly, we can describe the chain complexes $CN(X)\otimes_{\boldsymbol{[0,\infty)}} A_{[0,\epsilon]}$ as follows:

\begin{lemma}\label{L:blurred complex}
For any $\epsilon\in [0,\infty)$ we have
\[
{CN(X)\otimes_{\boldsymbol{[0,\infty)}} A_{[0,\epsilon]}}_n=\Z \left [\left \{(x_0,\dots , x_n) \mid \sum_{i=0}^n d(x_i,x_{i+1})\leq \epsilon \right\}\right ]
\]
with boundary maps given by alternating sums of maps  induced by the face maps. 
\end{lemma}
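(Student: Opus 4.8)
\emph{The plan} is to adapt the argument from the proof of Lemma~\ref{P: equivalence MH defn}, where the analogous coend for the point-supported functor $A_\epsilon$ was computed. The only change is that the interval support of $A_{[0,\epsilon]}$ turns the relevant relations from ones that \emph{kill} the lower-length tuples into ones that merely \emph{identify} them with their images; accordingly a quotient gets replaced by a colimit that collapses onto the top degree. I expect this contrast to be the conceptual heart of the proof, and the one step to get exactly right.

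First I would write $CN(X)\otimes_\PPP A_{[0,\epsilon]}$ as the coequaliser of
\[
\bigoplus_{l'\geq l} CN(X)(l)\boxtimes A_{[0,\epsilon]}(l') \rightrightarrows \bigoplus_{l\in \PPP} CN(X)(l)\boxtimes A_{[0,\epsilon]}(l),
\]
exactly as in Lemma~\ref{P: equivalence MH defn}, with the top arrow induced by the inclusions $CN(X)(l)\to CN(X)(l')$ and the bottom arrow by the coefficient maps $A_{[0,\epsilon]}(l'\geq l)$. Since $A_{[0,\epsilon]}(l)$ is $\Z$ for $l\in[0,\epsilon]$ and $0$ otherwise, the target reduces to $\bigoplus_{l\leq \epsilon} CN(X)(l)$, and every surviving relation has $l\leq l'\leq \epsilon$. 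On such pairs the coefficient map is the identity $\Z\to\Z$ --- this is precisely the departure from the point case, where it was zero --- so the relation identifies an element $c$ in the $l$-summand with its image under $CN(X)(l)\to CN(X)(l')$ in the $l'$-summand. This exhibits the coequaliser as the colimit $\varinjlim_{l\leq\epsilon} CN(X)(l)$ over the totally ordered set $[0,\epsilon]$, directed upward, with the filtration inclusions as transition maps.

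Because $[0,\epsilon]$ has a greatest element, namely $\epsilon$, this colimit is simply $CN(X)(\epsilon)$. To make this precise I would exhibit a mutual inverse: the canonical map $\bigoplus_{l\leq\epsilon}CN(X)(l)\to CN(X)(\epsilon)$ given by the inclusions coequalises the two arrows (by functoriality of the inclusions, $c$ and its image have the same image in $CN(X)(\epsilon)$), so it descends to a map out of the coend; conversely the inclusion of the $\epsilon$-summand into the coend is surjective, since every other summand is identified with its image in the $\epsilon$-summand, and is a section of the descended map, hence an isomorphism. Finally $CN(X)(\epsilon)=U(\Z[N(X)(\epsilon)])$ is by definition the unnormalised chain complex of $N(X)(\epsilon)$, so in degree $n$ it is the free abelian group on $N(X)(\epsilon)_n=\{(x_0,\dots,x_n)\mid \text{length}\leq\epsilon\}$ with boundary the alternating sum of the face maps. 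I would close by remarking that, unlike the magnitude boundary of Lemma~\ref{P: equivalence MH defn}, no vanishing condition appears here: deleting a vertex never increases the length (triangle inequality), so every face of a length-$\leq\epsilon$ tuple again has length $\leq\epsilon$ and all face maps are genuinely defined, which is exactly why the full alternating sum survives.
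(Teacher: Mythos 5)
Your proposal is correct and follows essentially the same route as the paper: present the coend as the coequaliser, observe that the target collapses to $\bigoplus_{l\leq\epsilon}CN(X)(l)$ and that the surviving relations identify each summand $CN(X)(l)$ with its image under the inclusion into $CN(X)(l')$, so the whole thing collapses onto $CN(X)(\epsilon)$, whose unnormalised chain complex gives exactly the stated groups and boundary maps. Your reformulation as the colimit $\varinjlim_{l\in[0,\epsilon]}CN(X)(l)$ over a poset with greatest element, together with the explicit mutual inverse and the remark that the triangle inequality is why no vanishing condition appears (in contrast to the point-supported case), is a slightly more detailed packaging of the same argument.
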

\begin{proof}
The chain complex $CN(X)\otimes_{\boldsymbol{[0,\infty)}} A_{[0,\epsilon]}$ is the coequaliser of the following diagram:

\[
\begin{tikzpicture}
\node (A) at (0,0) {$\underset{\ell\leq \ell'}{\bigoplus} CN(X)(\ell)\boxtimes A_{[0,\epsilon]}(\ell')$};
\node (B) at (9,0) {$\underset{\ell\in [0,\infty)}{\bigoplus} CN(X)(\ell)\boxtimes A_{[0,\epsilon]}(\ell)\, .$};
\draw[transform canvas={yshift=1.5ex},->]
(A) -- (B) node[above,midway]{$\underset{\ell\leq \ell'}{\bigoplus}CN(X)(\ell\leq \ell')\boxtimes 1$};
\draw[transform canvas={yshift=0.5ex},->]
(A) -- (B) node[below,midway]{$\underset{\ell\leq \ell'}{\bigoplus}1\boxtimes A_{[0,\epsilon]}(\ell\leq \ell')$};
\end{tikzpicture}
\]

First, note that  
\[
\bigoplus_{\ell\in [0,\infty)} CN(X)(\ell)\boxtimes A_{[0,\epsilon]}(\ell)\cong \bigoplus_{\ell\in [0,\epsilon]} CN(X)(\ell) \, ,
\]
 as the summands vanish if $\ell> \epsilon$, by definition of $A_{[0,\epsilon]}$. We next see what relations are given by the two parallel morphisms in the diagram. For $\ell'> \epsilon$ we have that the morphisms are both zero, so we assume that $\ell'\in [0,\epsilon]$. Furthermore, the morphisms are identical if $\ell=\ell'$, so we assume that $\ell\ne \ell'$, and thus have $0\leq \ell< \ell'\leq \epsilon$. Thus, the top horizontal morphism is $CN(X)(\ell)\to CN(X)(\ell')$, while the bottom morphism is $CN(X)(\ell)\to CN(X)(\ell)$. By equating these morphisms in $\oplus_{\ell\in [0,\epsilon]} CN(X)(\ell)$ we are thus identifying the  summand $CN(X)(\ell)$ with the image of the inclusion of $CN(X)(\ell)$ in $CN(X)(\ell')$. We thus obtain
 \[
 CN(X)\otimes_{\boldsymbol{[0,\infty)}} A_{[0,\epsilon]}\cong CN(X)(\epsilon) \, .
 \]
Similarly, the relations given by the pair of parallel morphisms tell us that the boundary maps on the quotient chain complex are the boundary maps of the chain complex $CN(X)(\epsilon)$, thus  alternating sums of maps induced by face maps.
\end{proof}

\begin{definition}
Let $(X,d)$ be a metric space.
The \define{blurred magnitude homology} of $X$ is  the homology of   $CN(X)\otimes_{\boldsymbol{[0,\infty)}} A_{[0,-]})$. 
\end{definition}

We  have:

\begin{thm}\label{T:blurred PH}
The functors $CN(X)\otimes_{\boldsymbol{[0,\infty)}} A_{[0,-]}$ and $CN(X)$ are isomorphic.
In particular, the blurred magnitude homology of $X$ is isomorphic to the persistent homology of $X$ with respect to the enriched nerve.
\end{thm}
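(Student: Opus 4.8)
The plan is to upgrade the objectwise isomorphisms already supplied by Lemma \ref{L:blurred complex} to a genuine natural isomorphism of functors $\PPP^\op\to\ch_\Ab$. For each fixed $\epsilon$ that lemma provides an isomorphism $\phi_\epsilon\colon CN(X)\otimes_\PPP A_{[0,\epsilon]}\overset{\cong}{\longrightarrow} CN(X)(\epsilon)$, so the only thing remaining is to check that the family $\{\phi_\epsilon\}$ is compatible with the transition maps on the two sides. Naturality in $\epsilon$ is exactly what separates the desired statement (the functors are isomorphic) from the weaker assertion that they merely agree objectwise, and so it is the whole content of the proof beyond Lemma \ref{L:blurred complex}.

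First I would make $\phi_\epsilon$ explicit. The coend computation in the proof of Lemma \ref{L:blurred complex} realises $CN(X)\otimes_\PPP A_{[0,\epsilon]}$ as the colimit of the diagram $l\mapsto CN(X)(l)$ over $l\in[0,\epsilon]$ with the nerve inclusions $\kappa_{l,\epsilon}\colon CN(X)(l)\to CN(X)(\epsilon)$ as structure maps; since $\epsilon$ is the top element of $[0,\epsilon]$ and these inclusions are injective, the colimit is $CN(X)(\epsilon)$ itself, with $\phi_\epsilon$ sending the class of a chain $x\in CN(X)(l)$ (for $l\le\epsilon$) to its image $\kappa_{l,\epsilon}(x)\in CN(X)(\epsilon)$.

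Next I would verify naturality for $\epsilon'\ge\epsilon$ by chasing a generator around the square whose vertical arrows are $\phi_\epsilon$ and $\phi_{\epsilon'}$, whose bottom arrow is the nerve inclusion $CN(X)(\epsilon)\to CN(X)(\epsilon')$ (the transition map of $CN(X)$), and whose top arrow is the chain map induced by the natural transformation $\iota\colon A_{[0,\epsilon]}\Rightarrow A_{[0,\epsilon']}$ (the transition map of $CN(X)\otimes_\PPP A_{[0,-]}$). On a summand indexed by $l\le\epsilon$ the component $\iota_l$ is the identity of $\Z$, so the top arrow sends the class of $x\in CN(X)(l)$ to the class of the same $x$ inside $CN(X)\otimes_\PPP A_{[0,\epsilon']}$. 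Consequently both composites send that class to $\kappa_{l,\epsilon'}(x)\in CN(X)(\epsilon')$, the clockwise route using the cocycle identity $\kappa_{\epsilon,\epsilon'}\circ\kappa_{l,\epsilon}=\kappa_{l,\epsilon'}$ and the counterclockwise route using $\phi_{\epsilon'}$ directly. Hence every such square commutes and the $\phi_\epsilon$ assemble into a natural isomorphism $CN(X)\otimes_\PPP A_{[0,-]}\cong CN(X)$.

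The ``in particular'' clause then follows formally: since $H_\star$ is a functor on $\ch_\Ab$, applying it to this natural isomorphism identifies the blurred magnitude homology $H_\star(CN(X)\otimes_\PPP A_{[0,-]})$ with $H_\star(CN(X))$, and the latter is by definition (taking the unit for $\boxtimes$ as the coefficient functor in the definition of persistent homology in Section \ref{S:PH}) the persistent homology of the enriched nerve $N(X)$. The main obstacle, and the step I would carry out most carefully, is the bookkeeping that matches the $\iota$-induced transition maps on the coends with the plain nerve inclusions under the identifications of Lemma \ref{L:blurred complex}; once generators are tracked correctly the remaining verifications are routine.
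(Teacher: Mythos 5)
Your proposal is correct and follows essentially the same route as the paper: invoke Lemma \ref{L:blurred complex} for the objectwise isomorphisms and then check that the naturality squares commute because, under these identifications, the transition maps on both sides are the nerve inclusions. Your version simply makes explicit the generator-chasing that the paper's one-line naturality argument leaves implicit, and the formal ``in particular'' step via functoriality of $H_\star$ matches the paper's intent.
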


\begin{proof}
By Lemma \ref{L:blurred complex} we know that there is 
 an isomorphism between the chain complexes ${CN(X)\otimes_{\boldsymbol{[0,\infty)}} A_{[0,\epsilon]}}$  and $CN(X)(\epsilon)$ for any $\epsilon\in [0,\infty)$.
Next,  for any $\epsilon \leq \epsilon'$, consider the square
\[
\begin{tikzpicture}
\node (1) at (0,0) {${CN(X)\otimes_{\boldsymbol{[0,\infty)}} A_{[0,\epsilon]}}$};
\node (2) at (4,0) {$CN(X)\otimes_{\boldsymbol{[0,\infty)}} A_{[0,\epsilon']}$};
\node (3) at (0,-2) {$CN(X)(\epsilon)$}; 
\node (4) at (4,-2) {$CN(X)(\epsilon')$}; 
\draw[->]
(1) edge  (2)
(1) edge  (3)
(3) edge  (4)
(2) edge  (4);
\end{tikzpicture}
\]
where the vertical morphisms are the aforementioned isomorphisms, the top horizontal morphism is given by functoriality of the coend, as discussed before Lemma \ref{L:blurred complex}, and the bottom horizontal morphism is $CN(X)(\epsilon\leq \epsilon')$. The fact that this  square commutes follows by the universal property of coends, see for instance \cite[Definition 1.1.6]{loregian}, applied to the coend in the top left corner.

\end{proof}

\section{Limit homology}\label{S:limit}

In \cite{V26} Vietoris introduced what is now called the Vietoris--Rips complex, as a way to define a homology theory for compact metric spaces\footnote{We note that while Vietoris introduced what is  called the ``Vietoris--Rips simplicial complex'' (at level $\epsilon$) $V(X)(\epsilon)$, here we discuss this homology theory using the  simplicial set $V^\tot(X)(\epsilon)$ associated to it, see  Lemma \ref{L:geom real tot simpl} and Definition \ref{D:VR}.}.
One  starts by considering
the  composition 
 \begin{equation}\label{vr lim cc}
 CV^\tot(X)=\left(\boldsymbol{(0,\infty)} \overset{V^\tot(X)}{\longrightarrow} \sSet \overset{\Z[ \cdot ]}{\longrightarrow} \sAb  \overset{U}{\longrightarrow}  \ch_\Ab \right)\, ,
 \end{equation}
 
\noindent
where $\Z[\cdot]$ and $U$ are defined as in \eqref{E:magn cc}. Here we denote by $\boldsymbol{(0,\infty)}$  the semigroup category associated to the semigroup poset $((0,\infty),\leq, +)$ given by the positive real numbers, with usual addition and order. Vietoris defined the homology of $X$  (for a compact metric space $X$)  to be  the   limit 
 \begin{equation}\label{E:inverse VR}
\mathcal{H}_\star(X)
:=\lim\, H_\star(CV^\simp(X)(\epsilon)) \, .
\end{equation}

  Vietoris's motivation was to prove what is now called the ``Vietoris mapping theorem'', a result that relates the homology groups of two spaces using properties of a map between them. While there has been some work done on Vietoris homology (see, e.g., \cite{H95,reed}), the theory has not  been as widely studied as other homology theories. A limit homology theory that plays a fundamental role in algebraic topology  is  \v{C}ech homology: given a space $X$ and a cover $\UU$ of $X$, one considers the simplicial homology $H_\star(CN(\UU))$ of the nerve of $\UU$. If $\VV$ is a cover of $X$ that refines $\UU$, then there is a homomorphism $H_\star(CN(\VV))\to H_\star(CN(\UU))$. 
 The \v{C}ech homology of $X$ is the  limit over all open covers of $X$.
The difference between Vietoris and \v{C}ech homology is immaterial for compact metric spaces, as for such spaces the homology theories are canonically isomorphic, see \cite{lefschetz}.

In later work, Hausmann \cite{H95} proposed a cohomological counterpart of the homology theory introduced by Vietoris, by considering the  colimit of the functor that one obtains by taking simplicial cohomology of the filtered chain complex in \eqref{vr lim cc}:

\[
\mathcal{H}^\star(X)
:=\colim\,H^\star(CV^\simp(X)(\epsilon))\, .
\] 

Hausmann called this cohomology theory ``metric cohomology'', and not Vietoris cohomology, because the adjective Vietoris had already been used to designate a cohomology theory which is in general not isomorphic to the  cohomological counterpart of the homology theory introduced by Vietoris \cite{H95}. The denomination ``Vietoris--Rips'' for  the  complex introduced by Vietoris  is also due to Hausmann, as the  complex introduced by Vietoris was in the meantime known as Rips complex \cite{R02}.

Instead of the Vietoris--Rips simplicial set, we can consider the enriched nerve associated to a metric space $X$, and take the limit of the resulting homology functor: similarly as in \eqref{vr lim cc}, we consider the composition
 \begin{equation}\label{nerve lim cc}
 CN(X)=\left(\boldsymbol{(0,\infty)} \overset{N(X)}{\longrightarrow} \sSet \overset{\Z[ \cdot ]}{\longrightarrow} \sAb  \overset{U}{\longrightarrow}  \ch_\Ab \right)\, ,
 \end{equation}
 \noindent
and then take the limit of this functor:

\begin{equation}\label{E:inverse nerve}
\lim\, H_\star (CN(X)(\epsilon))\, .
\end{equation}
In the following we relate the  limits \eqref{E:inverse VR} and \eqref{E:inverse nerve}.
\begin{remark}
We note that  since $\boldsymbol{(0,\infty)} $ does not have the structure of a symmetric monoidal category,  the discussion of Remark \ref{R:why monoidal poset} does not apply to the different types of limit homology considered in this section. 
\end{remark}
Let $C$ be any category, and let $(P,\leq ,+)$ be a semigroup poset. Similarly as for monoidal posets, we denote by $\boldsymbol{P}$ the  semigroup category associated to this semigroup poset. The  category with objects given by functors $\boldsymbol{P}\to C$ and morphisms given by natural transformations between them,  can be endowed with an extended pseudo-distance, called \define{interleaving distance} \cite{BSS15}. 
The interleaving distance was first introduced in \cite{C+09} for the monoidal poset $(\mathbb{R},\leq, +)$. 
The central notion is that of interleaving: for $\epsilon\geq 0$ two functors ${M,N\colon \pmb{\mathbb{R}}\to C}$ are \define{$\epsilon$-interleaved} if there are collections of morphisms $\{\phi_\epsilon\colon M(a)\to N(a+\epsilon)\mid a\in \mathbb{R}\}$ and $\{\psi_\epsilon\colon N(a)\to M(a+\epsilon)\mid a\in \mathbb{R}\}$ such that all diagrams of the following form  commute:

\begin{alignat}{3}
&\notag 
\begin{tikzpicture}
\node (1)at (-0.5,0) {$M(a-\epsilon)$};
\node (2) at (1,-1) {$N(a)$};
\node (3) at (2.5,0) {$M(a+\epsilon)$};
\path[->]
(1) edge (2)
(2) edge (3)
(1) edge (3);
\end{tikzpicture}
&&\notag 
\begin{tikzpicture}
\node (1) at (0,0) {$N(a)$};
\node (2) at (3,0) {$N(b)$};
\node (3) at (1,1) {$M(a+\epsilon)$};
\node (4) at (4,1) {$M(b+\epsilon)$};
\path[->]
(1) edge (2)
(1) edge (3)
(2) edge (4)
(3) edge (4);
\end{tikzpicture}
\\
&\notag \begin{tikzpicture}
\node (1)at (-0.5,0) {$N(a-\epsilon)$};
\node (2) at (2.5,0) {$N(a+\epsilon)$};
\node (3) at (1,1) {$M(a)$};
\path[->]
(1) edge (2)
(3) edge (2)
(1) edge (3);
\end{tikzpicture}
&&\notag
\begin{tikzpicture}
\node (1) at (0,0) {$M(a)$};
\node (2) at (3,0) {$M(b)$};
\node (3) at (1,-1) {$N(a+\epsilon)$};
\node (4) at (4,-1) {$N(b+\epsilon) \, .$};
\path[->]
(1) edge (2)
(1) edge (3)
(2) edge (4)
(3) edge (4);
\end{tikzpicture} 
\end{alignat}

\noindent
 Two functors that are $\epsilon$-interleaved have bounded interleaving distance \cite[Theorem 4.4]{C+09}. 

In many examples of filtered spaces that one considers in topological data analysis, what one obtains is not an interleaving of the corresponding homologies, but rather what is called an ``approximation''. 
For $c\geq 1$ two functors $M,N\colon \boldsymbol{[0,\infty)}\to C$ are \define{$c$-approximations} of each other if there are collections of morphisms $\{\phi_c\colon M(a)\to N(ca)\mid a\in \mathbb{R}_{\geq 0}\}$ and $\{\psi_c\colon N(a)\to M(ca)\mid a\in \mathbb{R}_{\geq 0}\}$ such that all diagrams of the following form  commute:

\begin{alignat}{3}
&\notag 
\begin{tikzpicture}
\node (1)at (-0.5,0) {$M(a)$};
\node (2) at (1,-1) {$N(ca)$};
\node (3) at (2.5,0) {$M(c^2a)$};
\path[->]
(1) edge (2)
(2) edge (3)
(1) edge (3);
\end{tikzpicture}
&&\notag 
\begin{tikzpicture}
\node (1) at (0,0) {$N(a)$};
\node (2) at (3,0) {$N(b)$};
\node (3) at (1,1) {$M(ca)$};
\node (4) at (4,1) {$M(cb)$};
\path[->]
(1) edge (2)
(1) edge (3)
(2) edge (4)
(3) edge (4);
\end{tikzpicture}
\\
&\notag \begin{tikzpicture}
\node (1)at (-0.5,0) {$N(a)$};
\node (2) at (2.5,0) {$N(c^2a)$};
\node (3) at (1,1) {$M(ca)$};
\path[->]
(1) edge (2)
(3) edge (2)
(1) edge (3);
\end{tikzpicture}
&&\notag
\begin{tikzpicture}
\node (1) at (0,0) {$M(a)$};
\node (2) at (3,0) {$M(b)$};
\node (3) at (1,-1) {$N(ca)$};
\node (4) at (4,-1) {$N(cb) \, .$};
\path[->]
(1) edge (2)
(1) edge (3)
(2) edge (4)
(3) edge (4);
\end{tikzpicture}
\end{alignat}

It shouldn't then  be too surprising that  functors that  are $c$-approximations of each other have bounded interleaving distance in the log scale \cite{KS13}.

For ease of reference, we state  the definition of $c$-approximations for  functors on the semigroup category $\boldsymbol{(0,\infty)}$:
\begin{definition}
Let $C$ be a category, and  let $M,N\colon \boldsymbol{(0,\infty)}\longrightarrow C$ be two  functors. 
For any $c\geq 1$ denote by $D_c\colon \boldsymbol{(0,\infty)}\to \boldsymbol{(0,\infty)}$ the functor that sends $a$ to $ca$. Furthermore, denote by $\eta_c\colon id_{\boldsymbol{(0,\infty)}} \Rightarrow D_c  $ the natural transformation given by $\eta_c(a)\colon a\to ca $. 
 A \define{$c$-approximation} of $M$ and $N$ is a pair  of natural transformations  
 
 \[
 \phi \colon M \Rightarrow ND_c
 \]
 and
  \[
 \psi \colon N \Rightarrow MD_c
 \]
 such that 
$ (\psi D_c ) \phi = M\eta_{c^2}$ and $(\phi D_c)\psi= N\eta_{c^2}.$
\end{definition}

\begin{lemma}\label{L:interleaving}
Let $M,N\colon \boldsymbol{(0,\infty)}\longrightarrow \Ab$ be two functors. If there exists a $c$-approximation  between $M$ and $N$, then 
\[
\lim\, M(\epsilon) \cong \lim\, N(\epsilon) \, .
\]
\end{lemma}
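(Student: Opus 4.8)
The plan is to show that a $c$-approximation gives rise to mutually inverse maps between the two inverse limits, exploiting the fact that the dilation functor $D_c$ is cofinal in $\PPP^\op$. The key observation is that although the natural transformations $\phi$ and $\psi$ do not have targets $N(\epsilon)$ and $M(\epsilon)$ but rather $N(c\epsilon)$ and $M(c\epsilon)$, passing to the inverse limit absorbs this reindexing, because as $\epsilon$ ranges over all of $[0,\infty)$ so does $c\epsilon$ (recall $c\geq 1$, so $D_c$ is a well-defined endofunctor, and it is cofinal since for every $a$ there is some $b$ with $cb\leq a$, namely any $b\leq a/c$).

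First I would record what the universal property of the inverse limit gives us. Since $\phi\colon M\Rightarrow ND_c$ is a natural transformation, precomposing the limit cone for $N$ with $D_c$ yields a cone over $M$, and hence by universality an induced map $\Phi\colon \varprojlim_\epsilon M(\epsilon)\to \varprojlim_\epsilon N(\epsilon)$; concretely, on components, $\Phi$ is determined by the requirement that the projection to $N(\epsilon)$ of $\Phi(x)$ agrees with $\phi_\epsilon$ applied to the $M(c\epsilon)$-component of $x$. Symmetrically, $\psi$ induces a map $\Psi\colon \varprojlim_\epsilon N(\epsilon)\to \varprojlim_\epsilon M(\epsilon)$. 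Here I am using cofinality of $D_c$ to see that indexing a cone over $N$ by the values $N(c\epsilon)$ still determines a unique map into the full limit $\varprojlim_\epsilon N(\epsilon)$; this is the one categorical point that needs a sentence of justification rather than mere symbol-pushing.

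Next I would verify that $\Psi\circ\Phi$ and $\Phi\circ\Psi$ are the identity. This is where the hypotheses $(\psi D_c)\phi = M\eta_{c^2}$ and $(\phi D_c)\psi = N\eta_{c^2}$ enter. The composite $\Psi\circ\Phi$ is, at the level of components, governed by the composite natural transformation $(\psi D_c)\circ\phi\colon M\Rightarrow MD_{c^2}$, which by hypothesis equals $M\eta_{c^2}$, i.e.\ the structure map of $M$ induced by the morphism $c^2\epsilon\to\epsilon$ in $\PPP^\op$. But the defining property of the limit cone is precisely that composing a projection with such an internal structure map of $M$ leaves the limit element unchanged: the projection of $\Psi(\Phi(x))$ to $M(\epsilon)$ equals $(M\eta_{c^2})_\epsilon$ applied to the $M(c^2\epsilon)$-component of $x$, which is just the $M(\epsilon)$-component of $x$ by the cone condition. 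Hence $\Psi\circ\Phi=\mathrm{id}$, and symmetrically $\Phi\circ\Psi=\mathrm{id}$, so $\Phi$ is the desired isomorphism.

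The main obstacle is purely bookkeeping: correctly tracking the reindexing by $D_c$ through the limit cones and confirming that the equations $(\psi D_c)\phi=M\eta_{c^2}$ translate into the statement that the round-trip composite is the internal $\PPP^\op$-structure map which the limit cone trivialises. There is no analytic or homological content here, and in particular we do not need $C=\Ab$ for the argument — the result holds for any category in which the relevant inverse limits exist, so the restriction to abelian groups in the statement is only a convenience for the application. The one genuinely nontrivial ingredient is the cofinality of $D_c$, which guarantees that the map induced on limits is well-defined in both directions; everything else follows formally from the universal property.
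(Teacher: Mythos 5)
Your proposal is correct and takes essentially the same route as the paper, whose proof is just a compressed version of yours: the natural transformations $\phi$ and $\psi$ induce homomorphisms $\bar{\phi}$ and $\bar{\psi}$ between the inverse limits, and the identities $(\psi D_c)\phi = M\eta_{c^2}$ and $(\phi D_c)\psi = N\eta_{c^2}$ force these to be mutually inverse, exactly by the cone-condition mechanism you spell out (your observation that $\Ab$ plays no role, and that $D_c$ reindexes harmlessly --- it is in fact an isomorphism of the index category, so even cofinality is more than you need --- is also accurate). One bookkeeping slip to fix: with the paper's conventions the structure maps of $M$ go from smaller to larger index, so $(M\eta_{c^2})_{\epsilon}\colon M(\epsilon)\to M(c^{2}\epsilon)$, and hence the $N(\epsilon)$-component of $\Phi(x)$ should be $\phi_{\epsilon/c}$ applied to the $M(\epsilon/c)$-component of $x$ (your formula ``$\phi_\epsilon$ applied to the $M(c\epsilon)$-component'' does not typecheck, since $\phi_\epsilon$ has domain $M(\epsilon)$ and codomain $N(c\epsilon)$), after which your verification that the round trip equals the structure map $(M\eta_{c^2})_{\epsilon/c^2}$, trivialised by the cone condition, goes through verbatim.
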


\begin{proof}

Let
 $\phi \colon M \Rightarrow ND_c$ and  ${\psi \colon N \Rightarrow MD_c}$ be the natural transformations which are part of the data of   the $c$-approximation. These induce  homomorphisms
 \[
\bar{\phi}\colon \lim\, M(\epsilon) \longrightarrow \lim\, N(\epsilon)  
 \]
 
and 
\[
\bar{\psi}\colon  \lim\, N(\epsilon) \longrightarrow \lim\, M(\epsilon) 
\] 
which are inverse to each other.
\end{proof}

\begin{thm}\label{T:nerve vr}
For all  $k=0,1,2,\dots$ there is an isomorphism

 \begin{equation}\label{eq:nerve vr}\notag
 \lim\,  H_{k}(CN(X)(\epsilon))\cong  \mathcal{H}_{k}(X) \, .
 \end{equation}
\end{thm}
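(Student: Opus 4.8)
The plan is to exhibit, for each fixed $k$, a $c$-approximation between the $\PPP^\op$-persistent abelian groups $M := H_k(CN(X)(-))$ and $N := H_k(CV^\simp(X)(-))$, and then to invoke Lemma \ref{L:interleaving} together with the defining identity $\mathcal{H}_k(X) = \varprojlim_\epsilon H_k(CV^\simp(X)(\epsilon))$. The constant will be $c = k+1$, which is legitimate because we argue one homological degree at a time, so that $c$ is allowed to depend on $k$.

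The two families of comparison maps both come from the triangle inequality. In one direction, if $\sum_i d(x_i,x_{i+1}) \leq \epsilon$ then $d(x_i,x_j) \leq \epsilon$ for all $i,j$, so $N(X)(\epsilon) \subseteq V^\simp(X)(\epsilon)$ is an inclusion of simplicial sets in every dimension; composing with $V^\simp(X)(\epsilon) \subseteq V^\simp(X)(c\epsilon)$ and applying $H_k$ yields $\phi_\epsilon \colon M(\epsilon) \to N(c\epsilon)$. In the other direction the inclusion only holds up to a dimension-dependent rescaling: a tuple $(x_0,\dots,x_n) \in V^\simp(X)(\epsilon)_n$ has length $\sum_i d(x_i,x_{i+1}) \leq n\epsilon$, so it lies in $N(X)(n\epsilon)_n$. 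The key observation is that $H_k$ depends only on simplices of dimension at most $k+1$, and for $n \leq k+1$ one has $n\epsilon \leq (k+1)\epsilon = c\epsilon$. Hence every $k$-chain and every $(k+1)$-chain of $CV^\simp(X)(\epsilon)$ is, under the identity on underlying tuples, a chain of $CN(X)(c\epsilon)$; since the boundary maps of both complexes are the same alternating sums of vertex-deletions, and deleting a vertex only shortens a tuple, cycles are sent to cycles and boundaries to boundaries. This assignment therefore descends to a well-defined homomorphism $\psi_\epsilon \colon N(\epsilon) \to M(c\epsilon)$.

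Next I would verify that $\phi$ and $\psi$ are natural transformations $\phi\colon M \Rightarrow N D_c$ and $\psi\colon N \Rightarrow M D_c$: for $\epsilon \leq \epsilon'$ the relevant squares commute because all four edges are induced by inclusions acting as the identity on underlying tuples. The $c$-approximation identities $(\psi D_c)\phi = M\eta_{c^2}$ and $(\phi D_c)\psi = N\eta_{c^2}$ are then immediate at the chain level: tracing a representative tuple through either round trip, namely $M(\epsilon) \to N(c\epsilon) \to M(c^2\epsilon)$ and $N(\epsilon) \to M(c\epsilon) \to N(c^2\epsilon)$, leaves the tuple unchanged and merely reinterprets it at scale $c^2\epsilon$, which is exactly the structure map $M\eta_{c^2}$, respectively $N\eta_{c^2}$, induced by the inclusion from scale $\epsilon$ to scale $c^2\epsilon$. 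Here one only needs each intermediate membership condition to hold, and this again follows from the length bounds together with $c = k+1$. Applying Lemma \ref{L:interleaving} then gives $\varprojlim_\epsilon H_k(CN(X)(\epsilon)) \cong \varprojlim_\epsilon H_k(CV^\simp(X)(\epsilon)) = \mathcal{H}_k(X)$.

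The main obstacle is precisely the dimension-dependent blow-up in the second family of maps: there is no single $c$ for which $V^\simp(X)(\epsilon) \hookrightarrow N(X)(c\epsilon)$ is a map of simplicial sets in all dimensions at once, so one cannot compare the full filtered simplicial sets by a single $c$-approximation. Restricting to a fixed homological degree $k$ is what resolves this, since $H_k$ only sees dimensions up to $k+1$, where the blow-up is bounded by $k+1$. The care required lies in checking that $\psi_\epsilon$ is genuinely well-defined on homology, that is, sends cycles to cycles and boundaries to boundaries; the only input for this is that vertex-deletion never increases the length of a tuple.
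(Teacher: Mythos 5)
Your proposal is correct and follows essentially the same route as the paper: both exploit the bound that a $p$-simplex of $V^\simp(X)(\epsilon)$ has length at most $p\epsilon$, build a $(k+1)$-approximation between the degree-$k$ persistent homology groups of the enriched nerve and the Vietoris--Rips simplicial set, and conclude via Lemma \ref{L:interleaving}. The only difference is presentational: the paper realises the comparison maps as honest inclusions of the truncated simplicial sets obtained by precomposing with $\Delta_{\leq k+1}^\op \to \Delta^\op$ and then applies homology, whereas you define $\psi_\epsilon$ directly on chains in degrees at most $k+1$ and check by hand that it sends cycles to cycles and boundaries to boundaries.
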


\begin{proof}

While there is an inclusion map $N(X)(\epsilon)\to V^\tot(X)(\epsilon)$ for all $\epsilon \in (0,\infty)$, in general there is an inclusion $V^\tot(X)(\epsilon)\to N(X)(c\epsilon)$ only for $c=\infty$, since a $p$-simplex in $V^\tot(X)(\epsilon)$ may have length equal to  $p\epsilon$. On the other hand, when computing simplicial homology in dimension $k$, we only need to consider simplices up to dimension $k+1$, and we will therefore  consider the truncations of the  simplicial sets $N(X)(\epsilon)_{\leq k}$, given by precomposing $N(X)(\epsilon)\colon \Delta^\op\to \Set$ with the inclusion $\Delta_{\leq k}^\op\to \Delta^\op$, and similarly for $V^\tot(X)(\epsilon)$.

The inclusion $N(X)(\epsilon)\to V^\tot(X)(\epsilon)$ induces an inclusion 
\[
\phi_{c}(\epsilon)\colon N(X)_{\leq k+1}(\epsilon)\to V^\tot(X)_{\leq k+1}(c\epsilon)
\]
 for any $c\geq 1$.
If $\sigma$ is a $p$-simplex in $V^\tot(X)(\epsilon)$, then its length is bounded by $p\epsilon$, and thus there is an inclusion map 
\[
\psi_{k+1}(\epsilon)\colon V^\tot(X)_{\leq k+1}(\epsilon)\to N(X)_{\leq k+1}((k+1)\epsilon)\, .
\] 
The collection of  maps 
\[
\left\{\phi_{k+1}(\epsilon)\colon  N(X)_{\leq k+1}(\epsilon)\to V^\tot(X)_{\leq k+1}((k+1)\epsilon) \mid \epsilon \in (0,\infty)\right\}
\]
 and 
 \[
 \left\{\psi_{k+1}(\epsilon)\colon V^\tot(X)_{\leq k+1}(\epsilon)\to N(X)_{\leq k+1}((k+1)\epsilon) \mid \epsilon \in (0,\infty)\right\}
 \]
  are easily seen to satisfy the properties of a $k+1$-approximation, as all maps involved are inclusions. Applying homology we obtain a $k+1$-approximation between the functors $H_k(CV^\tot(X))$ and $H_k(CN(X))$. We can now use Lemma \ref{L:interleaving} 
and obtain an isomorphism
\[
\lim\,  H_k(CN(X)_{\leq k+1}(\epsilon))\cong \lim\,  H_k(CV^\tot(X)_{\leq k+1}(\epsilon)) \, .
\]

Since $H_k(CN(X)_{\leq k+1}(\epsilon))$ is equal to $H_k(CN(X)(\epsilon))$ for all $k$, and similarly for the truncation of the Vietoris--Rips simplicial set, we obtain the claim.

\end{proof}

Finally, we aim to compare blurred and ordinary magnitude homology by using their definition in terms of coends. Thus, similarly as done in the previous part of this section, we consider these as functors on the semigroup category $\boldsymbol{(0,\infty)}$, by precomposing the functors with the inclusion functor ${\boldsymbol{(0,\infty)}\hookrightarrow \boldsymbol{[0,\infty)}}$: we denote by $H_k(CN(X)\otimes_{\boldsymbol{(0,\infty)}} A_{\epsilon})$ and $H_k(CN(X)\otimes_{\boldsymbol{(0,\infty)}} A_{(0,\epsilon]}$ the resulting functors, respectively. 

\begin{corollary}\label{C:MH not PH}
Let $k$ be a non-negative integer, and let $X$ be a metric space with $\mathcal{H}_{k}(X)\ncong 0$. Then 
\[
\lim\, H_k(CN(X)\otimes_{\boldsymbol{(0,\infty)}} A_{\epsilon})\ncong \lim\, H_k(CN(X)\otimes_{\boldsymbol{(0,\infty)}} A_{(0,\epsilon]}) \, .
\]
That is, under the limit,  the $k$th ordinary and blurred magnitude homology of $X$ are not isomorphic. In particular, for any finite metric space the limits differ for $k=0$.
\end{corollary}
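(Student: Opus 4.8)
The plan is to compute both inverse limits separately, showing that the blurred side reproduces the (nonzero) Vietoris homology while the ordinary side is forced to vanish, so that the two cannot be isomorphic. The conceptual engine is that the difference between the two coefficient systems lies entirely in their transition maps: the blurred system carries the genuine inclusion-induced maps, whereas the ordinary system carries only zero maps between distinct grades.

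First I would identify the right-hand side with Vietoris homology. By Theorem~\ref{T:blurred PH} the blurred magnitude homology agrees with the persistent homology of the enriched nerve, so for each $l$ there is an isomorphism $H_k(CN(X)\otimes_\PPP A_{[0,l]}) \cong H_k(CN(X)(l))$ compatible with the structure maps; these assemble into an isomorphism of inverse systems, and passing to the limit and invoking Theorem~\ref{T:nerve vr} yields
\[
\varprojlim_l H_k(CN(X)\otimes_\PPP A_{[0,l]}) \;\cong\; \varprojlim_\epsilon H_k(CN(X)(\epsilon)) \;\cong\; \mathcal{H}_k(X),
\]
which is nonzero by the standing hypothesis $\mathcal{H}_k(X)\ncong 0$.

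Next I would show the left-hand side is trivial. By the definition of the point-supported coefficient functors $A_l$ in Section~\ref{SS:alternative}, the comparison natural transformation $\iota\colon A_l \Rightarrow A_{l'}$ is the identity when $l=l'$ and the zero map whenever $l<l'$. Hence every transition map $H_k(CN(X)\otimes_\PPP A_l)\to H_k(CN(X)\otimes_\PPP A_{l'})$ of the inverse system is zero for $l<l'$; equivalently, $l\mapsto CN(X)\otimes_\PPP A_l$ is a graded object regarded as a persistent one with vanishing structure maps, in the sense of Remark~\ref{R:graded pers}. Since for every $l'>0$ there is some $l$ with $0<l<l'$ and the induced map out of the $l$-component is zero, any compatible thread $(a_l)_l$ has $a_{l'}=0$ for all $l'>0$, so the inverse limit is trivial. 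Combining this with the previous paragraph gives $0\ncong\mathcal{H}_k(X)$, which is the desired non-isomorphism. For the final assertion I would take $k=0$ with $X$ finite: for $\epsilon$ below the least positive distance of $X$ the nerve $N(X)(\epsilon)$ is discrete on the points of $X$, so $H_0(CN(X)(\epsilon))\cong \Z^{|X|}$ with identity transition maps, whence $\mathcal{H}_0(X)\cong \Z^{|X|}\ncong 0$ and the hypothesis is automatic.

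The main obstacle I anticipate is making the vanishing of the left-hand side fully rigorous, because it is sensitive to the precise meaning of $\varprojlim_l$. The argument requires that the limit be taken over the net $l\to 0$ with no least (stabilising) index, so that the zero transition maps genuinely force every component to vanish; this is exactly the feature that separates the graded magnitude limit from the persistent blurred limit, and I would state it carefully so as to exclude the degenerate reading in which one simply evaluates the functor at $l=0$ (which would spuriously return $\Z[X]$ in degree zero and collapse the distinction the corollary is asserting).
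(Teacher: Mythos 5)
Your proof is correct and follows essentially the same route as the paper's: the ordinary limit vanishes because all transition maps $CN(X)\otimes_\PPP A_{l}\to CN(X)\otimes_\PPP A_{l'}$ are zero, while the blurred limit is identified with $\varprojlim_\epsilon H_k(CN(X)(\epsilon))\cong \mathcal{H}_k(X)$ via the isomorphism $CN(X)\otimes_\PPP A_{[0,l]}\cong CN(X)(l)$ (Lemma~\ref{L:blurred complex}, functorially Theorem~\ref{T:blurred PH}) together with Theorem~\ref{T:nerve vr}, nonzero by hypothesis. Your extra care --- insisting the limit is over $l\to 0$ with no least index, and checking $\mathcal{H}_0(X)\cong \Z^{|X|}$ for finite $X$ --- only makes explicit what the paper's proof leaves implicit.
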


\begin{proof}
First, note that 
\[
\lim\, H_k(CN(X)\otimes_{\boldsymbol{(0,\infty)}} A_{\epsilon})\cong 0
\]
since for any $0<\epsilon\leq \epsilon'$ we have that  $CN(X)\otimes_{\boldsymbol{(0,\infty)}} A_{\epsilon}\to CN(X)\otimes_{\boldsymbol{(0,\infty)}} A_{\epsilon'}$ is the zero chain map.  By Lemma \ref{L:blurred complex} we have   
${CN(X)\otimes_{\boldsymbol{(0,\infty)}} A_{(0,\epsilon]}\cong CN(X)(\epsilon)} $   for any $\epsilon>0$, and further by 
 Theorem \ref{T:nerve vr} we have that  
 $
\lim\, H_k(CN(X)(\epsilon))
$
is isomorphic to the Vietoris homology of $X$. 
\end{proof}

\section{Conclusion}

In this manuscript we relate persistent homology to magnitude homology as two different ways of computing homology of  filtered simplicial sets. We give an answer to  two of the open problems formulated by Leinster and Shulman in \cite{LS17}, and listed on Page \pageref{questions} of this manuscript, which we show are intertwined. We define a blurred version of magnitude homology and show that it is the persistent homology taken with respect to a certain  filtered simplicial set. Furthermore, we show how blurred and ordinary magnitude homology differ in the limit: blurred magnitude homology coincides with Vietoris homology, while magnitude homology is trivial.

\section*{Acknowledgments} 

I am profoundly indebted to John Baez, with whom I had originally planned to write this manuscript. 
I am grateful to Jeffrey Giansiracusa, Richard Hepworth, and  Ulrike Tillmann for helpful feedback, as well the anonimous reviewers for their careful feedback and many helpful suggestions. This project was started during my visit to John Baez at the Centre for Quantum Computations (CQT) at NUS in Singapore, and I would like to thank the CQT  for the generous hospitality during my stay. I am grateful to the Emirates Group for sustaining me with an Emirates Award, and to the Alan Turing Institute for support through  an enrichment scholarship,  via 
EPSRC grant EP/N510129/1.

\bibliography{magnitude}
\bibliographystyle{plain}
\end{document}